\newtheorem{proposition}{Proposition}
\newcommand{\REV}[1]{#1}
\begin{document}


\title{accommodating new flights \\ into an existing airline flight schedule}

\author{\"{O}zge \c{S}afak \\ Department of Industrial Engineering, Bilkent University, 06800 Ankara, Turkey, ozge.safak@bilkent.edu.tr \\  \\ Alper Atamt{\"u}rk \\ Industrial Engineering \& Operations Research, University of California, Berkeley, California 94720, USA, atamturk@berkeley.edu \\ \\ M. Selim Akt{\"u}rk \\  Department of Industrial Engineering, Bilkent University, 06800 Ankara, Turkey, akturk@bilkent.edu.tr }

\maketitle

\begin{abstract}
\ignore{We present two novel approaches for airline rescheduling to respond to increasing passenger demand. In both approaches, we alter an existing flight
schedule to accommodate new flights while maximizing the airline's profit.} We present two novel approaches to alter a flight network for introducing new flights while maximizing airline's profit. A key feature of the first approach is to adjust the aircraft cruise speed
to compensate for the block times of the new flights, trading off flying time and fuel burn.
In the second approach, we introduce aircraft swapping as an additional mechanism to
provide a greater flexibility in reducing the incremental fuel cost and adjusting the capacity.
The nonlinear fuel-burn function and the binary aircraft swap and assignment decisions complicate the optimization problem significantly.
We propose strong mixed-integer conic quadratic 
formulations to overcome the computational difficulties. The reformulations enable solving instances with 300 flights from a major U.S. airline optimally within reasonable compute times. \\

\noindent
\textbf{Keywords:} Airline rescheduling, cruise speed control, aircraft swapping, CO$_2$ emissions, passenger spill,  mixed-integer conic quadratic optimization, McCormick inequalities.

\end{abstract}

\begin{center}
Jun 2018, Nov 2018, Apr 2019
\end{center}

\BCOLReport{18.02}

\pagestyle{plain}

\section{Introduction}

\ignore{The U.S. Federal Aviation Administration (FAA) Aerospace Forecast 2017--2037 \cite{FAAreport} reports that the demand for air travel in 2016 grew at the fastest pace since 2005 despite the modest economic growth in the U.S.}

\ignore{For reasons such as tourism, new job opportunities or even a natural disaster, to respond to increasing number of passengers, an airline needs to make immediate changes on its existing flight schedule to introduce new flights in a particular day. For such a short time period, leasing an aircraft just to serve these new flights may not be a feasible option. Therefore,}
While operating a daily flight schedule, several events, such as tourism, business conferences or even a natural disaster, might necessitate introducing new  flights on a particular day. In a relatively short time period, leasing or buying an aircraft just to serve these new flights may not be a feasible option. Therefore,  an airline aims to accommodate new flights with minimum disruption on the existing schedule. In near real time, to accommodate these
new flights into an existing flight schedule, the airline can only make some operational changes: either
it can use the idle times, if any, in the existing schedule, or it can change the departure times of the existing flights, or it can increase the aircraft cruise speed to shorten the flight times. The airline can utilize any one of these alternatives or any combination of them to open up enough time to accommodate the new flights. Increasing the cruise speed, however, has an adverse effect on the fuel burn, which in turn increases the fuel and carbon emission costs, i.e., the most significant component of an airline's operational costs. Since aircraft types have different fuel efficiencies, changing the aircraft assignments
may be beneficial in reducing the fuel burn, and consequently decreasing the operational costs.

In this paper, we propose two approaches to accommodate new flights into an existing schedule. The
 first approach carefully adjusts flight departure times and as well as aircraft cruise speed to  allow the required time for operating the new flights. 
 Increasing the cruise speed of a flight directly reduces its block time, and thereby opening up space  to accommodate new flights into the flight schedule. Although cruise time reduction provides a great opportunity to add new flights, increasing the speed of an aircraft comes with significant additional cost of fuel burn and CO$_2$ emission. To keep the cost of fuel manageable, \ignore{the decision of fleet type assignment to new flight may be critical. A solution can be improved through assigning} the new flights may be assigned to a fuel-efficient, but smaller aircraft.
  However, such an assignment may spill some of the passengers due to the insufficient seat capacity, resulting in a loss of revenue. Therefore,
  in order to address this trade-off,
  we propose a second approach, which incorporates an explicit aircraft swapping mechanism together with cruise time controllability. Aircraft swapping provides a greater opportunity in reducing the fuel burn and capturing passenger demand of new flights. Through flight timing and assignment decisions, we trade-off the incremental fuel cost associated with the cruise time compression with the revenue from the passengers. Although the second approach may provide substantial improvements in the airline's profit over the first one, the additional binary swapping decisions and the nonlinear fuel burn function make the optimization problem significantly more difficult to solve. Our aim is to provide a set of alternative schedules with increasing profit at a cost of additional compute time. A decision maker can interactively specify her preferences (or restrictions) and analyze their effect on the airline's profit when introducing new flights into an existing schedule.

Aircraft swapping is a practical way to adjust the capacity based on the demand changes during the booking period. Sherali et al. \cite{Sherali2005} develop a demand-driven re-fleeting model that dynamically re-assigns the aircraft in response to improved passenger demand forecast. They only allow aircraft re-assignment within the same aircraft family to keep the crew assignments unchanged. Jarrah et al. \cite{Jarrah} also re-assign fleet types by limiting the number of changes on the original fleet assignment. Wang and Regan \cite{Wang} examine a dynamic yield management problem when the assigned capacities are subject to a swap. The recent studies show that re-assignment of aircraft to reflect the changing demand yields substantial savings.

In addition to adjusting the capacity, most airlines make use of swap opportunities to build robust aircraft routings or reduce delays in the recovery plans. Ageeva \cite{Ageeva} adds a reward for each opportunity to swap aircraft in an aircraft routing model and encourage overlapping routes to have more swap opportunities in the case of an operational disruption. If two aircraft routings meet at more than one airport, aircraft can be swapped, and then returned to their original routings at a next meeting point. Therefore, if a flight is delayed, swapping the aircraft provides robustness by allowing a flight with high demand to be flown. 
Akt{\"u}rk et al. \cite{akturk} use the idea of swapping aircraft between flights to reduce the effect of a disruption on the schedule. They provide approximately 30\% cost savings compared to the delay propagation recovery approach. More recently, Arikan et al.\cite{Arikan} use both flight re-timing and aircraft swapping approaches to find minimum cost passenger and aircraft recovery plans. Based on an investigation of over 240,000 domestic routings of 13 major U.S. airlines, Lonzius and Lange \cite{Lonzius} confirm the delay-reducing effect of swap opportunities.

Re-timing approach has been also used to minimize the delay propagation in the entire airline flight network. Chiraphadnakul and Barnhart \cite{virot} and Dunbar \cite{Dunbar} adjust flight departure times to provide slacks across the connections so as to minimize delay propagation. Lan et al. \cite{lans} implement a re-timing approach to minimize misconnected passengers. A novel model to increase the robustness of aircraft routing is presented in Aloulou et al. \cite{aloulou}. They judiciously distribute slacks to connections where they are most needed. Ahmed et al. \cite{ahmed} also adjust flight departure times in their robust weekly aircraft routing problem. Cadarso and de Celis \cite{Cadar} propose a two-stage stochastic programming formulation which updates base schedules in terms of timetable and fleet assignments while considering stochastic demand, and proposes robust itineraries in order to ameliorate miss-connected passengers. Although the aircraft swapping and re-timing approaches have been widely used in the literature,  the novelty in this paper lies in the fact that they are implemented to allow for introducing new flights into the schedule. \ignore{In this paper, we aim to use the aircraft swapping  to adjust the capacity to respond to new scheduling requests as well as to reduce the fuel burn and open up enough space to accommodate the new flights
into an  existing flight schedule.}

In the airline industry there is a realization that cruise speed selections have a significant impact on the airline's profit. Sherali et al. \cite{Sherali2006} state that airline optimization models are quite sensitive to fuel burn. Cook et al. \cite{Cook09} discuss the option of flying faster to ensure the minimum time requirement for the connections of passengers and flying slower for conservation of fuel.

In recent years, the aircraft speed control has been the subject of several research studies such as air traffic management (ATM), airline disruption management, aircraft recovery, and robust schedule design.  The joint work of FAA/Eurocontrol \cite{FAAEurocontrol} emphasizes the importance of speed control for ATM to manage the fuel burn and terminal congestion. While the aim of the proposed methodology in \cite {FAAEurocontrol} is to save fuel by reducing cruise speed, once congestion in a terminal is determined, it is suggested to increase the speed of aircraft at the beginning of a rush period to avoid creating congestion and reduce the overall delay and fuel burn.  Kang and Hansen \cite{Kang} emphasize the importance of accurate flight fuel burn prediction to reduce airline's cost. They showed how ensemble learning techniques can be used to improve flight trip fuel burn prediction. In their study, a novel discretionary fuel estimation approach is proposed to assist dispatchers with better discretionary fuel loading decisions. Kohl et al. \cite{Kohl} discuss the ability to reduce passenger delay costs by accelerating the aircraft in their overview of airline disruption management processes. Marla et al. \cite{Marla} integrate disruption management with flight planning, which enables changes in the flight speed. Using a time-space network, they make multiple copies of flights representing different discrete departure times and cruise speeds. However, in the context of airline operations, this representation leads to a large number of copies of flights to be evaluated in the model. Arikan et al. \cite{Arikan} and Akt{\"u}rk et al. \cite{akturk} express cruise speed as a continuous variable and find an optimal trade-off between increased fuel cost and disruption costs such as delay and spilled passengers costs. To manage disruptions in a less costly manner, airlines are also interested in building robust schedules. More recently, Duran et al. \cite{Duran} and \c{S}afak et al. \cite{Safak} consider the fuel burn and CO$_2$ emission costs associated with the aircraft cruise speed adjustments to ensure the passenger connections with desired probabilities. G{\"u}rkan et. al \cite{Gurkan} also include aircraft cruise speed decisions in an integrated airline scheduling, aircraft fleeting and routing problem.  Different than the existing studies, our key feature is to include cruise time controllability decisions to \ignore{make enough time space} open-up enough time to accommodate new flights into the existing flight schedule. The major difficulty of including controllable cruise time decisions in the model is the nonlinearity of the fuel burn and carbon emission cost functions.  We handle these nonlinearities using formulation strengthening techniques in Akt{\"u}rk et al. \cite{akturk2009strong}. See Atamt\"urk and Narayanan \cite{AN:cmir} for more on strengthening conic mixed-integer programs.

The main contributions of the current paper are as follows:
\begin{itemize}
\item{We propose a new problem of accommodating new flights into an existing flight schedule of a particular day. In this context, for the first time, we introduce the options of
flight re-timing, aircraft cruise speed control and aircraft swapping to open
up enough time for new flights in the schedule.}
\item{We propose strong mixed-integer conic quadratic (MICQ) formulations
to overcome the computational difficulties of nonlinear fuel burn and emission functions as well as the penalty functions of arrival tardiness.}
\item{We improve and strengthen the MICQ formulations by adding Mc-Cormick inequalities. The new formulation with the McCormick inequalities
enables the solution of test instances with 300 flights from a major
U.S. airline optimally within reasonable compute times.}
\end{itemize}

The remainder of the paper is organized as follows. In Section~\ref{Sec:2}, we briefly describe the framework of the problem and then present a numerical example illustrating the benefits of cruise time controllability and the proposed aircraft swapping mechanism. Section~\ref{Sec:3} introduces the mixed-integer nonlinear programming formulations for the two proposed approaches.
In Section~\ref{sec:reformulation} we present stronger reformulations of the models to improve their solvability.
We computationally test the proposed mathematical models using a real-world data of a major U.S. airline in Section ~\ref{Computation} and conclude with a few final remarks in Section ~\ref{Sec:6}.

\section{Problem Definition}
\label{Sec:2}
In this section, we briefly describe the problem setting. Consider a set of new flight pairs (i.e., consecutive flights, specifically a flight from the hub to a new demand point and its return flight to hub) to be accommodated into the existing flight schedule in near real time. An airline needs to accommodate new flights into an existing flight schedule without excessively disrupting the existing schedule. Therefore, we only shift the departure times of existing flights within the intervals already determined by the airline. Moreover, any arrival tardiness of the existing flights due to inserting new flights is penalized in the objective. In hub-and-spoke networks, connecting passengers represent
a non-negligible percentage of the total number of passengers. Therefore, we also respect the connection of passengers at the hub airport while optimizing the
departure and cruise times of flights. There are two cases for a new flight: (1) If the new flight is assigned to a larger aircraft, then this assignment may  capture more passengers, providing a greater revenue, but  compressing the cruise times of flights to accommodate new flights may increase the fuel cost significantly; (2) if the new flight is assigned to a smaller fuel efficient aircraft to reduce the fuel burn, then there may be an additional cost of spilled passengers with a decrease in revenue. To increase the profit for an airline, we introduce a second model which additionally includes aircraft swapping decisions.   \ignore{While a swap may reduce the fuel burn, if a flight is assigned to a smaller aircraft after the swap, then some of the passengers of the subsequent flights may be spilled due to the low capacity of the aircraft. In this study, we consider the interplay among all decision variables with the goal of maximizing the airline's profit.}

In the following sections, we first define the fuel burn as a function of cruise time and the penalty function for arrival tardiness of flights, and then provide a numerical example to show how to utilize the cruise time controllability and aircraft swapping to \ignore{accommodate} open-up enough space for the block times of new flights.

\subsection{Fuel and CO$_2$ emission cost function} One of the main contributions of this study is to increase the aircraft cruise speed to compensate for the time required to operate new flights. However, we need to consider the adverse effect of increasing cruise speed on fuel and carbon emissions costs.  To estimate the fuel burn, we use the cruise stage fuel flow model developed by the Base of Aircraft Data (BADA) project of EUROCONTROL \cite{europaram}. This model has been widely used in the literature.  The fuel burn (kg) of a flight as a function of its cruise time  $f$ (minutes) and its aircraft type $t$ can be calculated as follows:
\begin{equation}
F^t\left(f\right)= \alpha^{t}  \frac{1}{f}  + \beta^{t}  \frac{1}{{f}^2}  + \gamma^t  {f}^3 + \nu^t  {f}^2. \label{eq:cost} \nonumber
\end{equation}
The coefficients \noindent $\alpha^{t}, \beta^{t},  \gamma^t,  \nu^t  > 0$ 
are expressed in terms of aircraft specific fuel consumption coefficients as
well as the mass of the aircraft, air density and gravitational
acceleration as provided in  \c{S}afak et al.\cite{Safak}. It is important to note that $F^t$ is a convex function whenever $f>0$. The minimizer of the fuel consumption function $F^t$ is represented by $u^t$, which is the ideal cruise time when an aircraft flies at the most fuel-efficient speed, referred to Maximum Range Cruise (MRC) speed. \ignore{In other words, MRC speed is the most fuel efficient speed of an aircraft.} Although the fuel burn is minimized at MRC speed, airlines may set higher cruise speed due to the scheduling constraints.

EUROCONTROL \cite{euro} states that each kg of fuel burn approximately produces 3.15 kg of CO$_2$ emission. Therefore, we can express fuel and CO$_2$ emission costs as a function of cruise time as follows:
\begin{equation}
c^t(f) = c_o F^t\left(f\right)%
\end{equation}
\noindent where $c_o$ is the total cost of of fuel and CO$_2$ emitted by an aircraft per kg of fuel burned.


\subsection{Penalty function for arrival tardiness} In this paper, we aim to accommodate new flights with minimal
disruptions on the existing flight schedule. In this quest, we penalize the deviation from the original arrival times for the existing flights. Hoffman and Ball \cite{hoffman} suggest to use a nonlinear delay cost function to better reflect the reality since flight delay costs tend to grow with time at a greater rate than linear rate. Moreover, EUROCONTROL \cite{delay} performs a detailed investigation of airline cost functions and reports that the power curve provides a good fit to passenger costs as a function of delay duration. Therefore, we penalize the arrival tardiness with a convex increasing function of tardiness $b$ as

\begin{equation}
P(b) = \rho b^{\zeta} \label{eq:penaltyfunction}
\end{equation}

\noindent where $\rho \geq 0$. As in Akt{\"u}rk et al. (5), we let $\zeta = 1.5$ in our computational experiments.  \REV{{It is important to note that the US costs might differ from the ones provided in the EUROCONTROL \cite{delay}.}} 

Arrival tardiness can be reduced by compressing the cruise time of flights with an additional fuel burn cost. Therefore, the assignment of fuel efficient aircraft to new flights becomes more critical in order to reduce both delay and fuel burn costs.

\subsection{Numerical example} In this section, first we will provide a numerical example to show how the cruise time controllability can be utilized to accommodate new flights into an existing airline  flight schedule. Then, we will extend the example to show how aircraft swapping together with the cruise speed control can be used to achieve a more profitable schedule.

We give a sample schedule for two aircraft in Table \ref{schedule1}. Tail numbers of the aircraft and the flight numbers along with the origin and destination airports, planned departure times in local ORD time, planned block times and demand for the flights are listed in the table. Each aircraft visits ORD at least once in a day.  Let us now introduce new round-trip flights from ORD to MSP and back.

\begin{table}[htbp]\scriptsize
  \centering
    \caption{Original schedule.}
  \label{schedule1}%
    \begin{tabular}{l|cccccccc}
    \hline
    \hline
    & Tail \# & Flight \# & \multicolumn{1}{c}{From}  & \multicolumn{1}{c}{To}    & \multicolumn{1}{c}{Plan. Dep.} & \multicolumn{1}{c}{Plan. Dur.}  & \multicolumn{1}{c}{Plan. Arr.} &  \multicolumn{1}{c}{Demand} \\
    \hline
    &\multirow{4}*{N53442}		&1586	&ORD	&MCO	&08:00	&03:04 &11:04	&200 \\
Existing   & 			&633		&MCO	&ORD	&12:00	&03:07	&15:07	&180\\
flights   &			&451		&ORD	&IAH		&18:10	&03:08	&21:18	&190\\
  &			&584		&IAH		&ORD	&22:30	&02:50	&01:20	&186\\ \hline
  & \multirow{4}*{N45425}		&527		&ORD	&IAH		&08:45	&03:02	&11:47	&151\\
Existing   & 			&521		&IAH		&ORD	&12:32	&03:03	&15:35	&154\\
flights &			&623		&ORD	&MCO	&17:00	&03:02	&20:02	&160\\
&			&679		&MCO	&ORD	&21:10	&03:10	&00:20	&163\\
    \hline
  New &  		&1842	&ORD	&MSP	&13:15	&01:40	&14:55	&183\\
    flights &			&430		&MSP	&ORD	&16:10	&01:45	&17:55	&168\\
    \hline
    \hline
    \end{tabular}%
\end{table}%

Figure \ref{fig:origschedule} gives the time-space network representation of the original schedule. The red and blue arcs in Figure \ref{fig:origschedule} represent routes for aircraft N53442 and N45425 respectively. The flight arcs originate from the departure airport at the planned departure time and end at the destination airport after the planned block time. Ground arcs represent the aircraft turnaround times needed to prepare the aircraft for the next flight.

\begin{figure}[htbp]
\centering
\includegraphics[width=0.7\textwidth]{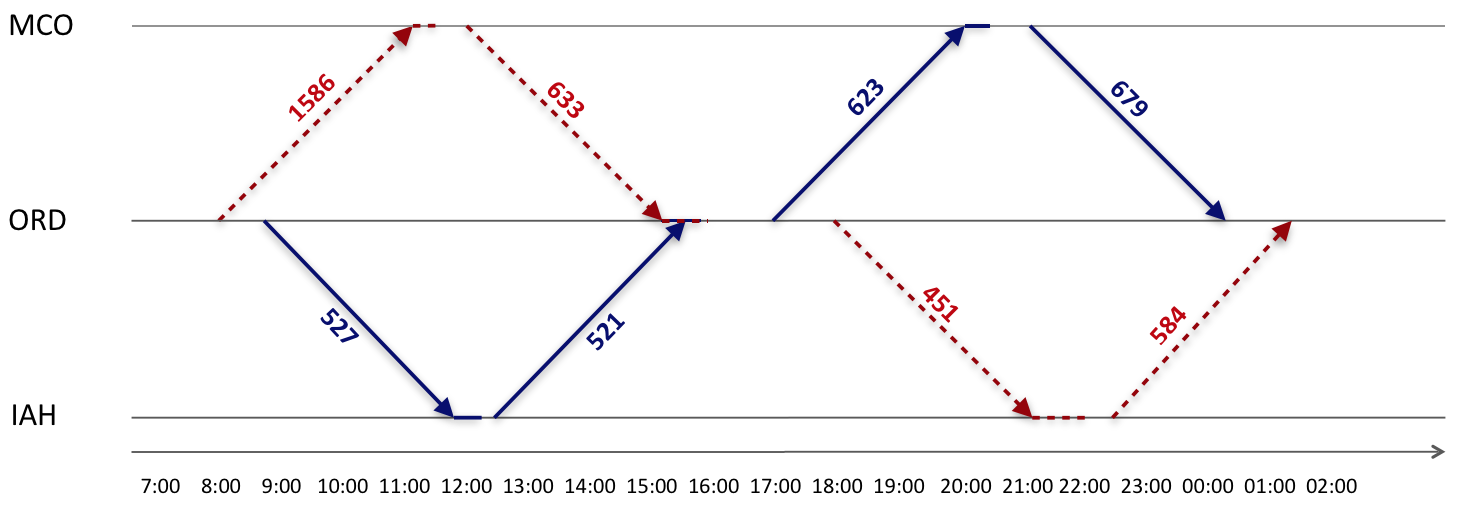}
\caption{Original time-space network.} \label{fig:origschedule}
\end{figure}

In this example, we assume that aircraft N53442 is a Boeing 767-300 and N45425 is an Airbus 320-212. The number of seats of the aircraft are 218 and 180, respectively. In the original schedule we assume that the aircraft fly at the most fuel efficient speed (MRC speed) and estimate the fuel burn rates as 87 kg/min and 40 kg/min for aircraft N53442 and N45425, respectively. The fuel burn rate is calculated using the fuel flow model of BADA as mentioned above. We assume that for each flight, non-cruise stages take 30 minutes. Then, cruise stages take 30 minutes less than flight block times given in Table \ref{schedule1}. Assuming $c_{fuel}$ = 1.2 \$/kg and $c_{CO_2}$ = 0.2 \$/kg, the total cruise stage fuel and carbon cost for the original schedule is \$100,593.

Let us assume that an airline wants to operate two new flights 1842 and 430. In order to open up sufficient time to accommodate these new flights, one approach is to compress the cruise times and adjust the departure times simultaneously. We refer to this approach with cruise time controllability as CTC. If the airline wants to meet the passenger demand of 183 for the new flight 1842, the only way to do so is to assign the new flights to aircraft N53442. These new flights can be placed between flights 633 and 451. The necessary block times for the new flights can be made available by left-shifting flights 1586 and 633 and right-shifting flights 451 and 584. However, we assume that the airline wishes to keep such a new schedule close to the original one. Thus, we only allow departure times to deviate at most 90 minutes from the planned departure times in the original schedule. In addition, if a flight arrives 15 minutes later than the original planned arrival times, we penalize the tardiness with a nonlinear function in \eqref{eq:penaltyfunction}. We let $\rho=5$ and $\zeta=1.5$ in the penalty function. Because of these scheduling limitations,
\ignore{operate business trips in the morning. Thus, we only allow departure times of morning flights to deviate 30 minutes from the planned departure times in the original schedule. In addition, we assume that airline wants to depart all flights before 11:00 pm in local times of departure airports. Therefore, }the new fights cannot be accommodated by only shifting the flight departure times. We also need to compress the cruise times of the existing flights 633, 451 and 584 by 17, 17 and 3 minutes, respectively. Besides, the cruise times of new flights 1842 and 430 are compressed by 8 minutes to satisfy the scheduling restrictions. However, flight 451 arrives 37 minutes later then the original planned arrival time, thus 22 minutes of this tardiness are penalized by \$552. In Figure \ref{fig:schedule1}, we give the time-space network representation of resulting schedule. In this figure, the dotted arcs represent the flights with original block times, whereas the line arcs represent the flights with compressed cruise times.

\begin{figure}[htbp]
\centering
\includegraphics[width=0.7\textwidth]{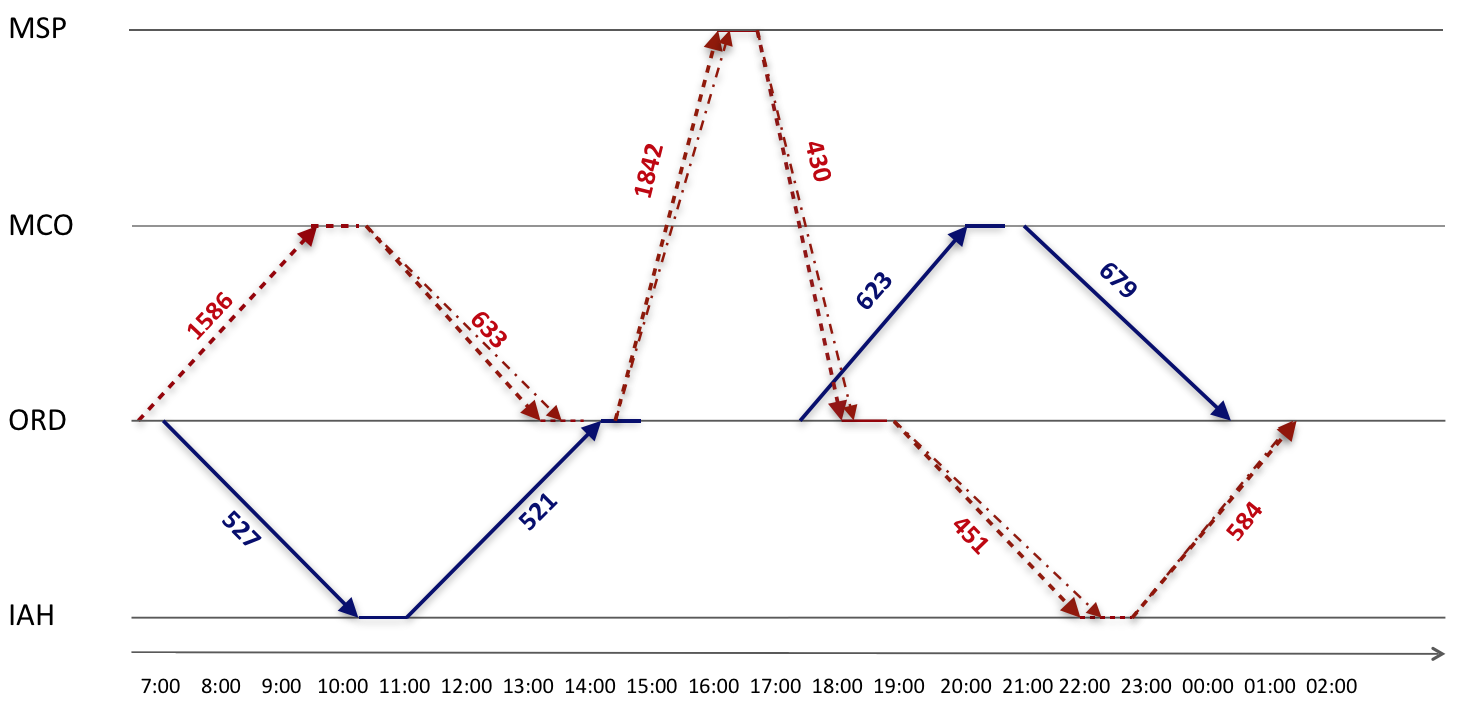}
\caption{Time-space network with CTC.} \label{fig:schedule1}
\end{figure}

Compressing the cruise times of the flights incurs additional costs of fuel burn and CO$_2$ emission. The total fuel burn and CO$_2$ emission costs of the
existing flights increases to \$101,286. Moreover, for new  flights, the total
fuel burn and CO$_2$ emission cost is \$16,044. Therefore, the fuel and emission cost increment compared to the original schedule is \$16,737 calculated as \$16,737 = \$101,286 + \$16,044 - \$100,593. To reduce the fuel burn by reassigning the aircraft, we also propose an aircraft swapping mechanism together with the cruise time controllability, referred to as CTC-AS.

\begin{figure}[htbp]
\centering
\includegraphics[width=0.7\textwidth]{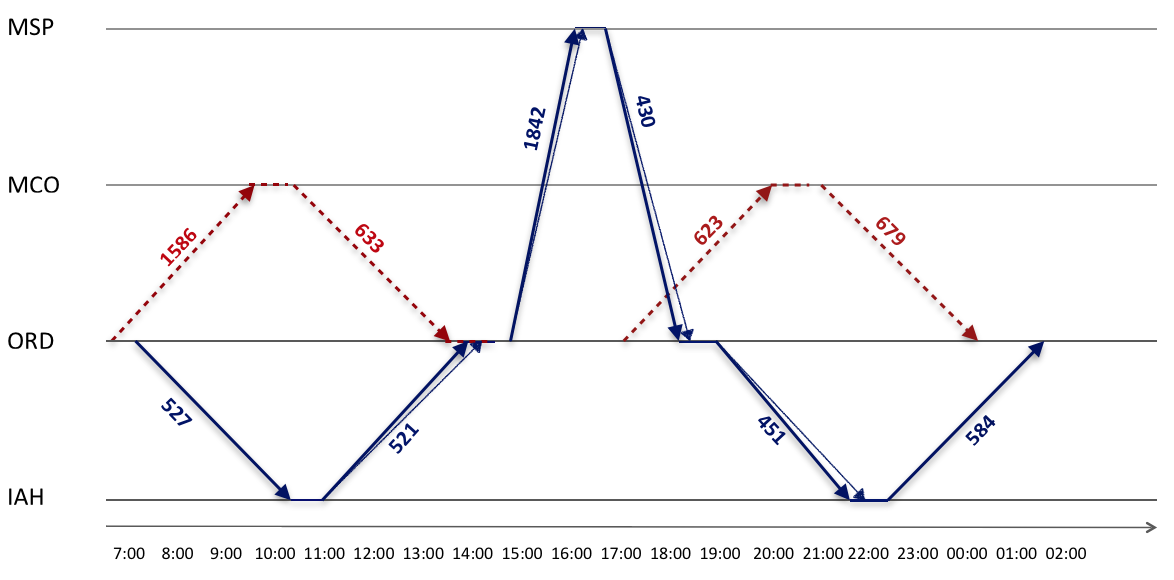}
\caption{Time-space network with CTC-AS.} \label{fig:schedule2}
\end{figure}

In the CTC-AS approach, we swap the aircraft of flights 451 and 623 before departure. In the new schedule, aircraft N53442 operates flights 1586, 633, 623, and 679, whereas aircraft N45425 operates 527, 521, 451, and 584. We provide the time-space network representation of the resulting schedule in Figure \ref{fig:schedule2}. To reduce the fuel expenses, the new flights are assigned to the fuel efficient aircraft N45425. However 10, 6 and 3 passengers of flights 451, 584 and 1842, respectively are spilled due to the low capacity of the aircraft N45425, thus resulting in a cost of \$2,007. \ignore{However, 10 passengers of flight 451, 6 p  and 3 passengers of the new flight 1842 are spilled due to insufficient seat capacity of the aircraft N45425, thus resulting in \$3,514. }
Similarly, passengers' revenue obtained from ticket sales of new flights are reduced to \$49,923 from \$49,548, since 3 passengers of new flights are not served due to low seat capacity. An additional \$500 cost of swapping is incurred in the new schedule. On the other hand, the savings from the fuel burn and CO$_2$ emission costs may compensate for these additional cost of spilled passengers, revenue loss and cost of swapping. Indeed, the
fuel expenses and CO$_2$ cost of the new
flights are reduced to \$8,118, almost
half of the fuel expenses of the CTC approach. Even though the total fuel burn and CO$_2$
emission cost of existing flights slightly increases to \$101,590, the fuel and emission cost increment significantly decreases from \$16,737 to \$9,115. Penalization cost for arrival tardiness of flight 451 also decreases to \$115. Therefore, resulting schedule improves the airline's profit from \$30,234 to \$35,412.

We give the operational cost components and revenues of two schedules achieved by CTC and CTC-AS approaches in Table \ref{eq:CostPublished}. Airline's profit is calculated as follows:
\begin{eqnarray}
\text{Profit } & = & \text{Revenue} - \left(\text{Fuel \& Emiss. Cost Increment}\right) - \text{Spilled Cost}  \nonumber \\
& & -  \text{Penalty Cost} - \text{Swap Cost} -  \left(\text{Crew \& Service Costs}\right). \nonumber
\end{eqnarray}

\begin{table}[ht]
		\setlength{\tabcolsep}{1pt}
  \centering
  \caption{Cost comparison.}
  \label{eq:CostPublished}%
  \scalebox{0.9}{
\begin{tabular}{c r r r  r r   }
\hline
\hline
\multicolumn{1}{r}{} &\multicolumn{1}{r}{Fuel \& Emiss.}  &\multicolumn{1}{r}{Spilled}& \multicolumn{1}{r}{Deviation}& \multicolumn{1}{r}{Swap} \\
\multicolumn{1}{r}{} &\multicolumn{1}{r}{Cost Increment ($\$$)}&\multicolumn{1}{r}{ Cost ($\$$)} &\multicolumn{1}{r}{ Penalty ($\$$)} & \multicolumn{1}{r}{Cost ($\$$)} \\
\hline
 \multicolumn{1}{l}{CTC} &16,044           &0	&552.0  &0			\\

\multicolumn{1}{l}{CTC-AS} &8,118   &2,007	&115.0    &500		\\
 \hline  \hline
& \multicolumn{1}{r}{Crew \& Service}  &  \multicolumn{1}{r}{Passengers}&\multicolumn{1}{r}{} \\
& \multicolumn{1}{r}{Cost ($\$$)}   & \multicolumn{1}{r}{Revenue ($\$$)}&\multicolumn{1}{r}{Profit ($\$$)}\\
  \hline
\multicolumn{1}{l}{CTC}&4,400  &49,923  &30,234\\

\multicolumn{1}{l}{CTC-AS} &4,400	 &49,548   &35,412\\
  \hline  \hline
    \end{tabular}
}
\end{table}

\section{Mathematical Formulations}
\label{Sec:3}
In this section, we present the mathematical formulations of the two approaches described in the previous section. We start with the simpler CTC model that adjusts the departure times and controls the cruise time, and then extend it to CTC-AS by incorporating aircraft swapping as well.

\subsection{Formulation with cruise time controllability}
\label{FirstFormulation}
We first give a list of sets, parameters and decision variables used in the model.\\

\noindent
\textbf{Sets:}
\begin{longtable}{p{.07\textwidth}p{.9\textwidth}p{.18\textwidth}}
$E$&set of existing flights in the schedule\\
$E_{O}$&set of existing outbound flights from the hub \\
$E_{I}$& set of existing inbound flights arriving to the hub \\
$N$&set of new flights \\
$N_{O}$&set of new outbound flights from the hub to a new demand point\\
$N_{I}$&set of new inbound flights from a new demand point to the hub\\
$T$&set of aircraft types\\
$C_E$&set of pairs of existing consecutive flights of the same aircraft, $(i,j), \ i \in E, j \in E$ \\
$C_N$&set of pairs of new consecutive flights of the same aircraft, $(i,j), \ i \in N_{O}, j \in N_{I}$\\
$U_i$& set of flights that can follow flight $i$, $i \in E \cup N_{I}$\\
$G_i$&set of existing outbound flights which have passenger connections from flight $i \in E_{I}$ \\
\end{longtable}


\noindent
\textbf{Parameters:}
\begin{longtable}{p{.07\textwidth}p{.9\textwidth}p{.18\textwidth}}
$\chi_i^t$& 1 if aircraft type $t \in{T}$ is originally assigned to existing flight $i \in{E}$, and 0 o.w.\\
$\left[\ell_{i}^{t},u_{i}^{t}\right] $&time window for the cruise time of flight $i \in{E \cup N}$ with aircraft type $t \in{T}$\\
$\left[d_i^{\ell},d_i^{u}\right]$&time window for the departure of  flight  $i \in{E \cup N}$ \\
$\eta_i$&non-cruise time of flight  $i \in{E \cup N}$  \\
$\tau_{i}^t$&turnaround time needed to prepare aircraft type $t \in{T}$ for the connection after flight $i \in E  \cup N$ \\
$\lambda_{ij}$&time needed for the connection of passengers from flight $i \in E_{I}$ to flight $j \in G_{i}$ \\
$\kappa^t$&number of seats of an aircraft type $t \in{T}$\\
$\mu_i$&number of passenger demand of each new flight $i \in{N}$  \\
$t(i)$&aircraft type assignment of existing flight $i \in E$ \\
$\pi_i$ &ticket price of new flight $i \in{N}$ \\
$\sigma_{i}$&cost of spilled passengers of new flight $i \in{N}$\\
$\phi$&crew and serving cost for new flights\\
$a^{o}_i$&original arrival time of flight $i \in E$
\end{longtable}

\noindent
\textbf{Decision variables:}
\begin{longtable}{p{.07\textwidth}p{.9\textwidth}p{.48\textwidth}}
$f_i^t$&cruise time of flight $i \in E \cup N$ with aircraft type $t \in T$\\
$d_i$& departure times of flight $i \in E \cup N $ \\
$a_i$&arrival time of flight $i$ to its destination $i \in E \cup N$\\
$b_i$&deviation from the original arrival time of flight $i \in E$\\
$z_{i}^t$& 1 if aircraft type $t \in{T}$ is assigned to flight $i \in{N}$, and 0 o.w.\\
$y_{ij}$& 1 if flight $i \in E \cup N_{I}$ is followed by flight $j \in U_i$, and 0 o.w.
\end{longtable}



In the new schedule, an existing flight $i \in E$ can be followed by a new outbound flight $j \in N_{O}$.  Similarly, each new inbound flight $i \in N_{I}$ can be followed by an existing flight $j \in E$.

For each $i \in E \cup N, t \in T$, we redefine the fuel and CO$_2$ emission cost function as
\begin{equation*}
c_i^t(f_i^t) =
\begin{cases}
c_o  \bigg (\alpha_{i}^{t}  \frac{1}{f_i^t}+ \beta_{i}^{t}  \frac{1}{{(f_i^t)}^2}+\gamma_i^t {(f_i^t)}^3+\nu_i^t {(f_i^t)}^2 \bigg) & \text{if } z_i^t=1 \\
0 & \text{if }z_i^t=0,
\end{cases}
\end{equation*}
so that if aircraft type $t$ is not assigned to flight $i$, then $c_i^t(f_i^t) = 0$.


Using the notation above, we now provide a mathematical model of the problem (CTC):  
\label{mathmodel}

\allowdisplaybreaks

\begin{align}
\text{max} \ &  \sum_{t \in T} \sum_{i \in N} \pi_i  \left( min \left(\mu_i, \kappa^t \right) \right) z_i^t  -\sum_{i \in E} \left(c_i^{t(i)}(f_i^{t(i)})
- c_i^{t(i)}(u_i^{t(i)}) \right)  \nonumber \\
& - \sum_{t \in T} \sum_{i \in N} c_i^{t}(f_{i}^{t})
- \sum_{t \in T} \sum_{i \in N}   \sigma_{i}  max (0, \mu_i - \kappa^t)  z_i^t  -\sum_{i \in E} \rho b_i^{\zeta}- \phi   \nonumber
\end{align}
\begin{align}
\text{s.t.} &   \sum_{i \in E_{I}} y_{in} \leq 1,  && n \in N_O  \label{eq:4}\\
& \sum_{i \in E_{O}} y_{ni} \leq 1,  && n \in N_I \label{eq:5} \\
& \sum_{i \in E_{I}} y_{in} + \sum_{i \in E_{O}} y_{mi}     \geq 1, && (n, m) \in C_N  \label{eq:6} \\
& \sum_{n \in N_{O}} y_{in}  \leq 1,   &&  i \in E_{I}  \label{eq:4a} \\
& \sum_{m \in N_{I}} y_{mi} \leq 1,  &&  i \in E_{O} \label{eq:4b} \\
& y_{in}  = y_{mj},  &&  (i, j) \in C_E, (n, m) \in C_N \label{eq:28}\\
&  | z_n^t - \chi_i^t | \leq (1 - y_{in} ),    &&  i \in E_{I}, n \in{N_O}, t \in T \label{eq:7} \\
&    | z_{n}^t - \chi_i^t | \leq (1 - y_{ni} ),   &&  i \in E_{O}, n \in N_I, t \in T \label{eq:9} \\
&    z_{n}^t = z_m^t,  &&  (n, m) \in C_N, t \in T \label{eq:8} \\
& \sum_{t \in T} z_{n}^t = 1,  && n \in N_O \label{eq:27} \\
& d_i + f_i^{t(i)} + \eta_i = a_i, && i \in E \label{eq:1}\\
& d_n + \sum_{t \in T} f_i^t + \eta_n = a_n, && n \in N \label{eq:2}\\
& a_i + \lambda_{ij} \leq d_j,  && i \in E_{I}, j \in G_{i} \label{eq:3}\\
&   a_n + \sum_{t \in T} \tau_{n}^t  z_n^t \leq d_{m}, && (n, m) \in C_N \label{eq:10}\\
&  \text{If } y_{in} =1   \text{ then }    a_i +\tau_{i}^{t(i)} \leq d_n, &&  i \in E_{I}, n \in N_O \label{eq:16}  \\
& \text{If } y_{ni} =1,  \text{ then }   a_n + \tau_{n}^{t(i)}  \leq d_i,  && i \in E_{O}, n \in N_I  \label{eq:15} \\
& \text{If }\sum_{n \in N_O} y_{in} =0,  \text{ then }  a_i +  \tau_{i}^{t(i)}   \leq d_{j}, && (i, j) \in C_E \label{eq:17}\\
&a_i - \left(a^{o}_i +15\right) \leq b_i  && i \in E  \label{eq:deviation}\\
& \ell_i^{t(i)} \leq f_i^{t(i)}  \leq   u_i^{t(i)}, &&  i \in E \label{eq:18} \\
&  \ell_i^{t} z_i^t \leq f_{i}^{t}  \leq  u_i^{t} z_i^t, &&  i \in N, t \in T  \label{eq:19} \\
&  d_i^{\ell} \leq d_i \leq d_i^{u},  && i \in E \cup N   \label{eq:26} \\
& z_i^t \in \{0,1\},  && i \in N, t \in T \label{eq:23}\\
& y_{ij}\in \{0,1\},  && i \in E \cup N_{I}, j \in U_i \label{eq:24}\\
& b_i \geq 0, && i \in E \label{eq:deviation1}
\end{align}

For given aircraft routes, we aim to generate a new flight schedule to introduce new flights with the goal of maximizing airline's profit. The first term of the objective is the revenue from ticket sales. The number of tickets sold can be determined as the minimum of passenger demand and seat capacity of the assigned aircraft. The remaining terms in the objective function represent the operational costs. The second term is the  incremental cost of fuel burn associated with speeding up the aircraft of existing flights. Similarly, the third term represents the total cost of fuel burn and carbon emissions for new flights. The fourth term is the cost of spilled passengers due to insufficient seat capacity of assigned aircraft to new flights. The fifth term is a penalty for arrival tardiness of the existing flights. Finally, the sixth term is the total costs of crew and service for new flights including landing fees, ground handling fees, insurance fees, onboard services costs, etc.

Constraint \eqref{eq:4} ensures that new outbound flight $n$ follows at most one existing flight $i$ arriving to the hub airport. Similarly, constraint \eqref{eq:5} guarantees that new inbound flight $n$ is followed at most one existing flight $i$ departing from the hub airport. Constraint \eqref{eq:6} assures that new flight pair $(n, m)$ is covered by an aircraft route. Constraints \eqref{eq:4a}--\eqref{eq:4b} ensure that an existing flight does not follow or is immediately followed by two different new flights. Constraint \eqref{eq:28} keeps the sequence of existing flights as in the original schedule. If a new flight pair $(n,m)$ is operated between an existing flight pair $(i,j)$, then the model ensures that $y_{in}=y_{mj}=1$. Otherwise, $y_{in}=y_{mj}=0$ for $(i,j) \in C_E$.

Constraints \eqref{eq:7}--\eqref{eq:27}, determine the aircraft type assignment to a new flight pair $(n, m)$. If $y_{in} = 1$ or $y_{ni} = 1$, then the corresponding aircraft of existing flight $i \in E$ is assigned to new flight pair $(n, m)$. In the schedule, we only allow new flight $n$ to depart from the hub and be immediately followed by a return flight $m$ so that we make the same fleet assignment to flights $n$ and $m$ in constraint \eqref{eq:8}. A new flight is assigned to one fleet type in constraint \eqref{eq:27}.

Constraints \eqref{eq:1}--\eqref{eq:2} define the arrival time of the flights to their destination airport. Constraints \eqref{eq:3} ensure the minimum time requirement for the passenger connections. Similarly, constraints \eqref{eq:10}--\eqref{eq:17} maintain the precedence relations among the flights assigned to the same aircraft in the new schedule. For a new flight pair $(n,m)$, constraint \eqref{eq:10}
guarantees the minimum time requirement for aircraft connections between flights $n$ and $m$. If an existing flight $i$ follows the new flight $n$, then constraint \eqref{eq:16} ensures that flight $n$ does not depart before the arrival time of flight $i$ plus its turnaround time.    If a new flight $n$ follows an existing flight $i$, then constraint \eqref{eq:15} enables incoming aircraft of flight $n$ to catch flight $i$. On the other hand, if no new flight is scheduled between existing flights $i$ and $j$, then constraint \eqref{eq:17} keeps the minimum aircraft turnaround time between flights $i$ and $j$ as in the original schedule. Constraints \eqref{eq:deviation} determine the arrival tardiness for new flights. In this study, we do not penalize the first 15 minutes of tardiness as it is a common notion in practice.

Constraints \eqref{eq:18}--\eqref{eq:19} apply cruise time upper and lower bounds for each flight, respectively. Constraint \eqref{eq:26} defines the time intervals for the departures of both existing and new flights. For instance, due to time sensitivity of the business trips, one can allow
departures in the morning within certain time intervals, which have already
been determined by the airline. The rest of the constraints \eqref{eq:23}--\eqref{eq:deviation1} define the domain of decision variables.

An important feature of the proposed mathematical formulation is that
the problem is formulated without keeping track of individual aircraft. The decision variable $y$ denotes on which flights are operated before/after the new flights. Since we also keep the sequence of existing flights operated by the same aircraft as it was in the sample schedule, following the index of $y$ decision determines the route of each aircraft. In the model, the aircraft tail of existing flights are given and not changed. This is a great advantage so that the proposed model determines the aircraft tail assignment with less computational effort. Note that the aircraft type information is also necessary for the cost calculations in the objective function.

The proposed formulation is a mixed-integer  optimization model with nonlinear cost terms in the objective function and logical constraints \eqref{eq:16}, \eqref{eq:15}, and \eqref{eq:17}. \REV{{In Section \ref{sec:reformulation}, we present the logical constraints mathematically using the Big-M method and McCormick inequalities, respectively.  These reformulations enable us to solve relatively large instances to optimality very efficiently.}}

\subsection{Formulation with cruise time controllability and aircraft swapping}
\label{SecondFormulation}
In this section, we additionally include an option of swapping aircraft in the model. Although the ability of swapping the aircraft provides a greater flexibility to make time spaces for new flights with an increased profit, several challenges arise. First, additional binary aircraft assignment decisions for the existing flights are required. Second, there exists a trade-off between the fuel burn and number of spilled passengers of existing flights, since swapping the aircraft of a flight with a more fuel efficient but smaller aircraft not only decreases the fuel burn but may also spill some of the passengers. Third, cruise time decisions of the existing flights depend on the aircraft assignments due to fuel burn as a function of aircraft type.

In order to include an option of aircraft swapping, we first define additional sets and parameters, and redefine the decision variables.\\

\noindent
\textbf{Sets \& Parameters:}
\begin{longtable}{p{.07\textwidth}p{.9\textwidth}p{.48\textwidth}}
$R$& set of aircraft routes in the original schedule \\
$E_r$& set of flights in each aircraft route $r \in R$ \\
$S(i)$& set of possible flights whose aircraft can be swapped with the aircraft of flight $i \in E$ \\
$p(i)$&predecessor of flight $i \in E$\\
$\sigma_{i}$&cost of spilled passengers of flight $i \in{E \cup N}$ \\
$\psi$ &cost of swapping an aircraft
\end{longtable}

\noindent
\textbf{Decision variables:}
\begin{longtable}{p{.07\textwidth}p{.9\textwidth}p{.48\textwidth}}
$z_i^t$& 1 if aircraft type $t \in{T}$ is assigned to flight $i \in{E \cup N}$, and 0 o.w.\\
$s_{ij}$& 1 if the aircraft of flight $i \in E$ and flight $j \in S(i)$ are swapped at their destination and 0 o.w.
\end{longtable}

Then, the mathematical formulation that includes the option of the aircraft swapping  (CTC-AS) is stated as follows: 
\allowdisplaybreaks
\ignore{
\begin{align}\text{max} \quad & \sum_{t \in T} \sum_{i \in N} \left( min \left(\mu_i, \kappa^t \right) \right)  z_{i}^t  p_i  \tag{CTC-AS}\nonumber \\
 & -\sum_{i \in E} \left( \sum_{t \in T} c_i^{t}(f_i^t)  - c_i^{t(i)}(u_i^{t(i)}) \right) \nonumber - \sum_{t \in T} \sum_{i \in N} c_i^{t}(f_i^t) \nonumber \\
& - \sum_{t \in T} \sum_{i \in E \cup N} \left( max \left(0, \mu_i - \kappa^t \right) \right)  z_{i}^t  \sigma_{i} - \sum_{i \in E} \sum_{j \in S(i)} \left(\psi s_{ij}\right)/2 \label{eq:00.1} \\
 \text{subject to} \nonumber
\end{align}
}
\begin{align}
\text{max}  & \sum_{t \in T} \sum_{i \in N} \pi_i   \text{ min} \left(\mu_i, \kappa^t \right)  z_{i}^t  \nonumber  
 -\sum_{i \in E} \left( \sum_{t \in T} c_i^{t}(f_i^t)  - c_i^{t(i)}(u_i^{t(i)}) \right) \\ &\nonumber - \sum_{t \in T} \sum_{i \in N} c_i^{t}(f_i^t) \nonumber
 - \sum_{t \in T} \sum_{i \in E \cup N} \sigma_{i} \text{ max} \left(0, \mu_i - \kappa^t \right)   z_{i}^t  \nonumber \\
 &  -\sum_{i \in E} \rho b_i^{\zeta}  - \sum_{i \in E} \sum_{j \in S(i)} \left(\psi/2\right) s_{ij} - \phi  \nonumber
 \end{align}
\begin{align}
& \text{s.t.} \quad \text{Constraints} \quad \eqref{eq:4}-\eqref{eq:4b} && \nonumber \\
& | z_n^t - z_i^t | \leq (1 - y_{in} ), &&  i \in E_{I}, n \in{N_{O}}, t \in T \label{eq:5.1} \\
& | z_{n}^t - z_i^t | \leq (1 - y_{ni} ),  &&  i \in E_{O}, n \in N_{I}, t \in T \label{eq:6.1} \\
& z_{n}^t = z_{m}^t,  &&  (n, m) \in C_N, t \in T \label{eq:7.1} \\
& \sum_{t \in T} z_{n}^t = 1,   &&  n \in N_{O} \label{eq:8.1} \\
& | z_{p(i)}^t - z_i^t | \leq \sum_{j \in S(i)} s_{ij},  && t \in T, i \in E \label{eq:9.1} \\
& z_j^{t(p(i))} \geq s_{ij,} &&  i \in E, j \in S(i) \label{eq:11.1} \\
&  \sum_{i \in E_r} \sum_{j \in S(i)} s_{ij} \leq 1,  && r \in R \label{eq:12.1} \\
& s_{ij} = s_{ji}, && i \in E, j \in S(i) \label{eq:13.1} \\
& |  y_{in} - y_{mj} | \leq \sum_{k \in S(j)} s_{jk}, &&  (i, j) \in C_E, (n, m) \in C_N \label{eq:14.1}\\
& |y_{in}- y_{mk} | \leq 1 - s_{jk},  &&   k \in S(j), (i, j) \in C_E, \nonumber \\
&	&& (n, m) \in C_N   \label{eq:15.1}\\
& d_i + \sum_{t \in T} f_i^t + \eta_i = a_i, && i \in E \cup N \label{eq:4.1}\\
&   a_n + \sum_{t \in T} \tau_{n}^t  z_n^t \leq d_{m},  && (n, m) \in C_N \label{eq:17.1}\\
&  \text{If } y_{in} =1,   \text{then }      a_i + \sum_{t \in T} \tau_{i}^t  z_i^t  \leq d_n, &&  i \in E_{I}, n \in N_{O} \label{eq:16.1}  \\
& \text{If } y_{ni} =1,  \text{then }   a_n \! + \!\sum_{t \in T} \tau_{n}^t  z_{n}^t   \leq d_i,  &&  i \in E_{O}, n \in N_{I}  \label{eq:18.1} \\
&  \text{If } \sum_{k \in S(j)} \! s_{jk} = 0, \text{then }  a_i \! +  \! \sum_{t \in T} \tau_{i}^t  z_i^t   \leq d_{j}, && (i, j) \in C_E \label{eq:19.1}\\
&   \text{If } s_{jk}  \! = 1, \text{then }   a_i \! + \!  \sum_{t \in T} \tau_{i}^t  z_i^t   \leq  d_{k}, && k \in S(j), (i, j) \in C_E \label{eq:20.1} \\
& a_i + \lambda_{ij} \leq d_j,  && i \in E_{I}, j \in G_{i} \label{eq:3.2}\\
&a_i - \left(a^{o}_i +15\right) \leq b_i && i \in E  \label{eq:newdeviation}\\
& \ell_i^{t} z_i^t \leq f_i^t,  \leq  u_i^{t} z_i^t  &&  i \in E \cup N, t \in T  \label{eq:21.1} \\
& d_i^{\ell} \leq d_i \leq d_i^{u}, && i \in E \cup N  \label{eq:22.1} \\
& z_i^t \in \{0,1\}, && i \in E \cup N, t \in T \label{eq:24.1}\\
& y_{in}\in \{0,1\},  &&   i \in E \cup N_{I}, n \in U_i \label{eq:25.1}\\
& s_{ij} \in \{0,1\},  && i \in E, j \in S(i) \label{eq:27.1}\\
& b_i \geq 0,  && i \in E \label{eq:newdeviation1}
\end{align}

The objective function of the second model is slightly different than the objective of the first model. If the aircraft of a flight is swapped with a smaller aircraft, then some of the passengers of the subsequent flights might be spilled. Therefore, we include an additional cost term for spilled passengers of the existing flights. We also add a new swap cost term to cover the cost of changes caused by swapping the aircraft. The rest of the objective terms are same as the first model.
Despite the additional cost of spilled passengers and swapped aircraft, the CTC-AS introduces potential for greater profit by reducing the fuel burn.

We use the same constraints \eqref{eq:4}--\eqref{eq:4b} of the first formulation to accommodate new flight pairs in an aircraft route. However, the aircraft type assignment constraints \eqref{eq:5.1}--\eqref{eq:8.1} are slightly different.
 If new flight $n$ follows an existing flight $i$, then aircraft type assignments of flights $i$ and its immediate successor $n$ will be same per constraint \eqref{eq:5.1}. Similarly, if new flight $n$ is followed by an existing flight $i$, then constraint \eqref{eq:6.1} assigns the same aircraft type to flights $n$ and $i$. Constraint \eqref{eq:7.1} ensures that the aircraft type assignments of the consecutive new flights are same. Constraint \eqref{eq:8.1} assigns exactly one aircraft type to each new flight pair.

Constraints \eqref{eq:9.1}--\eqref{eq:11.1} relate aircraft swap decisions to assignment decisions. If aircraft of flight $i$ is not swapped with another one before its departure, then the aircraft type assignment of flight $i$ and its predecessor flight $p(i)$ will be the same. In other words, if there is no swap before the departure of flight $i$, then $s_{ij}=0$ for all flights $j \in S(i)$. Therefore, $z_i^t=z_{p(i)}^t$ for each aircraft type $t$ per constraint \eqref{eq:9.1}. Otherwise,  i.e., $s_{ij}=1$, then the aircraft type assignment of flight $j$ and the predecessor flight $p(i)$ of flight $i$ will be the same. That is, flight $j$ is taken over by the aircraft of the predecessor of $i$,  in the original schedule. Aircraft type assignment of flight $j$ is modeled in constraint \eqref{eq:11.1}.  Constraint \eqref{eq:12.1} limits the number of swaps on an aircraft path. Constraint \eqref{eq:13.1} guarantees the symmetry of swap decisions between flights.

If an aircraft is not swapped with another, we keep the same sequence of flights in an aircraft route as in the original schedule. If the aircraft of flight pair $(i,j)$ is not swapped, a new flight pair $(n,m)$ may be accommodated between flights $i$ and $j$. That is, if the aircraft of flight $j$ is not swapped before its departure, i.e., $\sum_{k \in S(j)} s_{jk}=0$, then constraint \eqref{eq:14.1} ensures that $y_{in}=y_{mj}$ for new flight pair $(n,m)$ as in the constraint \eqref{eq:28} of the first formulation. Otherwise, if the aircraft of flight $j$ is swapped with an aircraft of any flight $k$, i.e., $s_{jk}=1$, then constraint \eqref{eq:15.1} guarantees that $y_{in}=y_{mk}$ for new flight pair $(n,m)$.

Constraint \eqref{eq:4.1} define the arrival time of flights to their destination airport. To model flight departure times and cruise times, we need to make sure that the departure of the successor of flight $i$ is later than the arrival time of flight $i$ plus its aircraft turn time. We first define the successor flight of $i$ in the new schedule. There are three cases for the precedence relations of flight $i$. Case 1: the new flight $n$ follows flight $i$, i.e., $y_{in}=1$, in which case constraint  \eqref{eq:16.1} enables incoming aircraft of flight $i$ to catch new flight $n$. Case 2: \ignore{no new flight after flight $i$ (i.e., $\sum_{n \in N_{O}} y_{in}=0$) and} no swap is made after flight $i$ (i.e., $\sum_{k \in S(j)} s_{jk}=0$), in which case flight $i$ is  followed by  either a new flight or its successor.  If it is followed by a new outbound flight, then the inbound flight of new trip will follow the successor of flight $i$ as well. Therefore, in both situations, departure time of the successor of flight $i$ will be later than the arrival time of flight $i$ plus its turnaround time as it is guaranteed by constraint \eqref{eq:19.1}.  Case 3: \ignore{there is no new flight after flight $i$ and} Aircraft of flight $i$ is swapped with aircraft of flight $k$, in which case flight $k$ follows flight $i$ in the new schedule. Therefore, constraint \eqref{eq:20.1} guarantees the minimum aircraft turn time between flight $i$ and $k$. We ensure the minimum time requirements for the aircraft connections of new flight pair with constraint \eqref{eq:17.1} as well as the aircraft connections between the new flight and its successor with constraint \eqref{eq:18.1}. Similarly, we guarantee the minimum connection time for passengers with constraints \eqref{eq:3.2}. Constraints \eqref{eq:newdeviation} determine the deviation from the original arrival times for existing flights.

The mathematical formulation for the CTC-AS approach also provides aircraft tail assignment. Following the index of decisions $y$ and $x$ determines the sequence of flights operated by the same aircraft. Then, aircraft tail information of the existing flights helps to identify the tail assignment for each route developed. \REV{{The logical constraints \eqref{eq:16.1}-\eqref{eq:20.1} are also represented using the BigM and McCormick inequalities in Section \ref{sec:reformulation}.}}

The mathematical formulations above include nonlinear (convex) fuel and CO$_2$ emission cost and penalty cost terms in the objective function. To efficiently handle the nonlinearity, we use convexification results from mixed-integer conic quadratic optimization. To simplify the presentation, we drop the indices of the variables and parameters as follows:

\begin{equation*}
c(f) =
\begin{cases}
c_o(\alpha  \frac{1}{f}+\beta  \frac{1}{f^2}+\gamma  f^3+\nu  f^2) & \text{if } z=1 \nonumber \\
0 & \text{if }z=0. \nonumber
\end{cases}
\end{equation*}
The function $c\left(f\right)$ with the indicator variable $z$ is discontinuous and its epigraph $E_F=\left\{(x,f,t)\in{\{0,1\} \times \mathbb{R}^2_+} : c(f) \leq t, \ell z \le  f \le u z \right\}$ is nonconvex. The next proposition describes the convex hull of $E_F$. The convexification of convex functions with indicators are discussed in detail in Akt{\"u}rk et al. \cite{akturk2009strong} and G{\"u}nl{\"u}k and Linderoth \cite{gunluk}.


\begin{proposition}
\label{conicfuel}
{[\c{S}afak et al. \cite{Safak}]}
The convex hull of the set $E_F$ can be expressed as
\begin{align}
t & \geq c_o(\alpha  p+\beta   q +\gamma   r +\nu  h )\\
{z}^2       & \leq     p f \label{eq:con1}\\
{z}^4       & \leq     {f}^2  q z \label{eq:con2} \\
{f}^4       & \leq     {z}^2 r   f \label{eq:con3}\\
{f}^2       & \leq     h z \label{eq:con4}
\end{align}
 Moreover, each inequality \eqref{eq:con1}--\eqref{eq:con4} can be represented by conic quadratic inequalities.
\end{proposition}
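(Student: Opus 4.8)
The plan is to establish the two halves of a convex-hull characterization separately: first that the proposed system is a valid relaxation of $E_F$ (every point of $E_F$, suitably lifted, satisfies the inequalities), and second that every extreme point of the relaxation lies in $\mathrm{conv}(E_F)$. I would begin by introducing the auxiliary variables $p,q,r,h$ that linearize the four monomials appearing in $c(f)$, namely $p \leftrightarrow 1/f$, $q \leftrightarrow 1/f^2$, $r \leftrightarrow f^3$, $h \leftrightarrow f^2$, so that the objective bound $t \ge c_o(\alpha p + \beta q + \gamma r + \nu h)$ is exact precisely when each of \eqref{eq:con1}--\eqref{eq:con4} holds at equality. The key structural observation, which I would invoke from Akt{\"u}rk et al.~\cite{akturk2009strong} and G{\"u}nl{\"u}k and Linderoth~\cite{gunluk}, is that for a convex function with an indicator, the closed convex hull of the epigraph is obtained by taking the perspective of each convex term in the separable decomposition; inequalities \eqref{eq:con1}--\eqref{eq:con4} are exactly the perspective (homogenization) reformulations of $p \ge 1/f$, $q \ge 1/f^2$, $r \ge f^3$, $h \ge f^2$ with respect to the indicator $z$, restricted to the domain $\ell z \le f \le u z$.

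Concretely, the first step is to check feasibility: for a point $(z,f,t)\in E_F$ with $z=1$, set $p=1/f$, $q=1/f^2$, $r=f^3$, $h=f^2$ and verify each of \eqref{eq:con1}--\eqref{eq:con4} holds with equality; for $z=0$ we have $f=0$, and each inequality is satisfied trivially (the left sides vanish), with $p,q,r,h$ free to take the value $0$. Hence the projection of the feasible region onto $(z,f,t)$ contains $E_F$, and since the region is convex it contains $\mathrm{conv}(E_F)$. The second step is the reverse inclusion: take an extreme point of the system; by the perspective-function theory, fractional $z\in(0,1)$ forces each of \eqref{eq:con1}--\eqref{eq:con4} to be tight at an extreme point (otherwise one could perturb), which pins down $p,q,r,h$ as the perspective values $z\cdot(\text{monomial of }f/z)$, and then the point is a convex combination of the $z=0$ and $z=1$ members of $E_F$ along the segment scaling $f$ linearly with $z$. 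For $z\in\{0,1\}$ the point lies in $E_F$ directly. This yields $\mathrm{conv}(E_F)$ exactly.

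Finally, for the ``moreover'' clause I would rewrite each of \eqref{eq:con1}--\eqref{eq:con4} as a rotated second-order cone constraint or a composition thereof: \eqref{eq:con1}, $z^2 \le pf$ with $p,f\ge 0$, is directly a rotated cone $\|(2z,\,p-f)\|_2 \le p+f$; \eqref{eq:con4}, $f^2 \le hz$, is analogous; and \eqref{eq:con2} $z^4 \le f^2 q z$ and \eqref{eq:con3} $f^4 \le z^2 r f$ are degree-four inequalities that factor through an auxiliary variable (e.g.\ $w^2 \le f q$ combined with $z^2 \le w z$, and similarly for the other), each piece being a rotated second-order cone; I would cite Akt{\"u}rk et al.~\cite{akturk2009strong} for the standard gadget that expresses $x^{2^k}\le \prod y_i$ via a chain of rotated cones.

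The main obstacle I anticipate is the reverse inclusion argument: one must argue carefully that at an extreme point with $0<z<1$ all four conic inequalities are simultaneously active and that the resulting $(z,f,t)$ is genuinely a convex combination of two points of $E_F$ (and not merely dominated by the convex hull in the $t$-direction). This requires the observation that the four monomials share the \emph{same} scaling variable $z$, so the ``$f$ scales linearly with $z$'' certificate is consistent across all four terms simultaneously — a point that is clean here only because $c(f)$ is separable into convex monomials each handled by the same indicator. Since the statement is quoted verbatim from \c{S}afak et al.~\cite{Safak}, I would in practice cite that reference for the full verification and merely sketch the perspective-reformulation reasoning above.
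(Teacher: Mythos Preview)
The paper does not supply its own proof of this proposition: it is stated as a citation of \c{S}afak et al.\ \cite{Safak}, with a pointer to Akt{\"u}rk et al.\ \cite{akturk2009strong} and G{\"u}nl{\"u}k and Linderoth \cite{gunluk} for the general convexification theory, and nothing further. So there is no argument in the paper to compare your sketch against.

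That said, your outline is exactly the argument one would expect the cited references to contain. Identifying \eqref{eq:con1}--\eqref{eq:con4} as the perspective reformulations of the four convex monomials $1/f$, $1/f^2$, $f^3$, $f^2$ with respect to the common indicator $z$ is the right structural reading, and your validity direction is clean. Your reverse-inclusion sketch via extreme points is slightly informal (the standard route is to invoke the G{\"u}nl{\"u}k--Linderoth result that, for a sum of univariate convex terms sharing one indicator, the perspective of each term gives the closed convex hull of the joint epigraph, so no separate extreme-point analysis is needed), but the idea you describe---that all four perspectives scale consistently along the ray $f\mapsto f/z$---is precisely why the separable case works. Your conic decompositions for \eqref{eq:con1} and \eqref{eq:con4} are correct rotated cones, and the two-cone factoring for \eqref{eq:con2} and \eqref{eq:con3} is the standard gadget. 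Since you already plan to cite \cite{Safak} for the full details, your proposal matches both the content and the level of rigor the paper itself adopts.
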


The next proposition \ref{conicdelay} also shows conic quadratic representation of the nonlinear penalty function.


\begin{proposition} The epigraph of penalty function, $E_P=\left\{(b,g) \in{\mathbb{R}^2_+} :  b^{1.5} \leq g \right\}$ is conic quadratic representable.
\label{conicdelay}
\end{proposition}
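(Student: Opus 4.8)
The plan is to exhibit $E_P$ as the projection onto $(b,g)$ of a region cut out by two \emph{rotated} second-order cone constraints and two linear inequalities, introducing a single auxiliary variable $s$ that plays the role of $\sqrt{b}$. This uses the same strengthening/representation toolbox as Proposition~\ref{conicfuel}.

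The core step is the equivalence, for $b \ge 0$ and $g \ge 0$,
\[
b^{1.5} \le g \quad \Longleftrightarrow \quad \exists\, s \ge 0 : \ s^2 \le b \ \text{ and } \ b^2 \le g\,s .
\]
I would prove this directly. For ``$\Rightarrow$'' take $s := \sqrt{b}$, so that $s^2 = b$ and $g\,s = g\sqrt{b} \ge b^{1.5}\sqrt{b} = b^2$. For ``$\Leftarrow$'', from $s^2 \le b$ and $s \ge 0$ we get $s \le \sqrt{b}$, hence $b^2 \le g\,s \le g\sqrt{b}$; dividing by $\sqrt{b}$ when $b > 0$ gives $b^{1.5} \le g$, while when $b = 0$ the inequality $b^{1.5} = 0 \le g$ holds since $g \ge 0$ (this is why $g \ge 0$ must be kept explicitly in the representation: if $s = 0$ then $b = 0$, but $g$ would otherwise be unconstrained).

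It then remains to rewrite the two bound constraints as conic quadratic inequalities. Both are rotated second-order cone (hyperbolic) constraints: $s^2 \le b$ is $s^2 \le b \cdot 1$, equivalent to $\big\| (2s,\, b-1) \big\|_2 \le b+1$, and $b^2 \le g\,s$ is equivalent to $\big\| (2b,\, g-s) \big\|_2 \le g+s$, using the identity $(v+w)^2 - (v-w)^2 = 4vw$ (which simultaneously encodes $v, w \ge 0$). Adjoining the linear inequalities $s \ge 0$ and $g \ge 0$, we have written $E_P$ as the image of a conic-quadratic-representable set under the coordinate projection $(b,g,s) \mapsto (b,g)$, which is exactly the assertion.

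The step I expect to require the most care is choosing the auxiliary variable and the orientation of the inequalities. Some natural decompositions fail: writing $b^3 \le g^2$ as $b^2 \le w$ together with $b\,w \le g^2$ keeps the target inequality but introduces the constraint $\{(b,w,g) : b\,w \le g^2\}$, which is nonconvex (e.g.\ $(1,4,2)$ and $(4,1,2)$ lie in it but their midpoint does not) and hence not conic quadratic representable; other substitutions instead yield the wrong-sided inequality $b^3 \le g$. The construction above is written to make both inclusions, and the degenerate cases $b = 0$ and $s = 0$, transparent. If one prefers to avoid the explicit construction, it also suffices to invoke the standard fact (see, e.g., Ben-Tal and Nemirovski, or Alizadeh and Goldfarb) that $t \ge x^{p/q}$ with rational $p/q \ge 1$ and $x \ge 0$ is second-order cone representable, specialized to $p/q = 3/2$.
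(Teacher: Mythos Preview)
Your proof is correct and takes essentially the same approach as the paper: the paper rewrites $b^{1.5}\le g$ as $b^4\le g^2\cdot b\cdot 1$ and splits this into the two hyperbolic inequalities $b^2\le xg$ and $x^2\le b\cdot 1$ with an auxiliary $x\ge 0$, which is exactly your pair $b^2\le gs$, $s^2\le b$ with $s$ playing the role of $\sqrt{b}$. Your version is more careful than the paper's---you verify both implications, treat the degenerate cases $b=0$ and $s=0$, and explain why the sign constraint $g\ge 0$ matters---but the underlying construction is identical.
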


\begin{proof}
Since  $b^{1.5}$ is a convex function for $b \geq 0$, its epigraph $E_P$ is a convex set. Let us restate $b^{1.5} \leq g$ as

\begin{equation}
b^{4} \leq g^{2} b 1. \label{con1}
\end{equation}

\noindent Observe that \eqref{con1} can be rewritten as two hyperbolic inequalities

\begin{eqnarray}
b^2 \leq x g  \label{eq:con2} \\
x^2 \leq b 1 \label{eq:con3}
\end{eqnarray}

\noindent where $x \geq 0$. According to Ben-Tal and Nemirovski \cite{nemirovski}, hyperbolic inequality \eqref{eq:con2} can be represented by conic quadratic inequality below

\begin{equation}
|| (2b, x-g) || \leq x+g. \nonumber
\end{equation}

\noindent Similarly, hyperbolic inequality \eqref{eq:con3} can be  represented by the conic quadratic inequality. That concludes the proof.

\end{proof}



Moreover, the mathematical formulations involve logical constraints. In the next subsection, we replace logical constraints by Big-M constraints. Then, we strengthen the formulation by replacing logical constraints with stronger McCormick inequalities.

\section{Stronger Reformulations}
\label{sec:reformulation}

The nonlinear fuel and emission costs, binary aircraft assignment and swapping decisions and the logical ``if-then" constraints in the models of the preceding section increase the computational burden of solving the problem significantly. In order to solve the problem with less computational effort, in this section we give alternative, stronger reformulations  We first introduce a simple linearizion of the logical constraints using the well known Big-M method with carefully computed constants. Then we improve this formulation using McCormick estimators.

\subsection{Reformulations with Big-M constraints}
\label{Sec:4}

Formulations with logical constraints by means of conditional ``if-then" statements can be numerically more robust than the Big-M formulations if the Big-M formulations use large constants to express the constraints linearly. Solvers may exploit the explicit conditional statements to improve the preprocessing and branching algorithms. Details on logical constraints can be found in \cite{cplex}. However, for our formulations, we are able to carefully tighten the Big-M constants using the implied upper and lower bounds on the variables, leading to more effective formulations. In the following, we will present linear reformulations of the logical constraints of CTC and CTC-AS with the corresponding Big-M constraints.


The formulation CTC involves logical constraints \eqref{eq:16}, \eqref{eq:15}, and \eqref{eq:17}. We introduce below three linear constraints to replace these logical constraints, respectively.
\begin{proposition}
For $i \in E_{I}$, $n \in N_O$, inequality 
\allowdisplaybreaks
\begin{eqnarray}
d_n - d_i -  f_i^{t(i)} \geq \tau_{i}^{t(i)} + \eta_i - \delta^1_{in} \left(1 - y_{in} \right), \label{eq:bigm1}
\end{eqnarray}
where $\delta^1_{in}:=  \max (d_i^u + u_i^{t(i)} +\tau_{i}^{t(i)}+ \eta_i$ $-d_n^\ell, 0)$,
is equivalent to \eqref{eq:16}.
\end{proposition}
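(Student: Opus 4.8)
The plan is to show that the Big-M inequality \eqref{eq:bigm1} encodes exactly the same logical information as the conditional statement \eqref{eq:16}, namely ``if $y_{in}=1$ then $a_i + \tau_i^{t(i)} \le d_n$''. Since $y_{in}$ is binary by \eqref{eq:24}, I would argue by cases on the two possible values of $y_{in}$, and in each case check that \eqref{eq:bigm1} reduces to the correct condition, using the variable bounds \eqref{eq:18} and \eqref{eq:26} to verify that the chosen constant $\delta^1_{in}$ is large enough to make the inequality vacuous when $y_{in}=0$ but not so large as to be needed when $y_{in}=1$.

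First I would rewrite \eqref{eq:16} in terms of the model variables. By the arrival-time definition \eqref{eq:1}, we have $a_i = d_i + f_i^{t(i)} + \eta_i$, so the consequent $a_i + \tau_i^{t(i)} \le d_n$ is equivalent to $d_n - d_i - f_i^{t(i)} \ge \tau_i^{t(i)} + \eta_i$. Thus \eqref{eq:16} says: if $y_{in}=1$, then $d_n - d_i - f_i^{t(i)} \ge \tau_i^{t(i)} + \eta_i$. Now consider \eqref{eq:bigm1} when $y_{in}=1$: the term $\delta^1_{in}(1-y_{in})$ vanishes and \eqref{eq:bigm1} becomes exactly $d_n - d_i - f_i^{t(i)} \ge \tau_i^{t(i)} + \eta_i$, which matches the consequent. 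So on $y_{in}=1$ the two statements coincide.

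The remaining case is $y_{in}=0$, where \eqref{eq:16} imposes nothing, so I must check that \eqref{eq:bigm1} is also vacuous, i.e., automatically satisfied by every point feasible for the bound constraints. With $y_{in}=0$, \eqref{eq:bigm1} reads $d_n - d_i - f_i^{t(i)} \ge \tau_i^{t(i)} + \eta_i - \delta^1_{in}$, equivalently $\delta^1_{in} \ge (d_i + f_i^{t(i)} + \tau_i^{t(i)} + \eta_i) - d_n$. Using $d_i \le d_i^u$ from \eqref{eq:26}, $f_i^{t(i)} \le u_i^{t(i)}$ from \eqref{eq:18}, and $d_n \ge d_n^\ell$ from \eqref{eq:26}, the right-hand side is at most $d_i^u + u_i^{t(i)} + \tau_i^{t(i)} + \eta_i - d_n^\ell$; since $\delta^1_{in}$ is defined as the maximum of this quantity and $0$, the inequality holds, and taking the maximum with $0$ merely guarantees $\delta^1_{in} \ge 0$ (so that \eqref{eq:bigm1} is never artificially tightened when $y_{in}=1$). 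This establishes that \eqref{eq:bigm1} is implied whenever $y_{in}=0$, completing the equivalence.

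I do not anticipate a serious obstacle here; the argument is a routine two-case check. The only point requiring a little care is the justification that the bounds used ($f_i^{t(i)} \le u_i^{t(i)}$, $d_i \le d_i^u$, $d_n \ge d_n^\ell$) are available at every feasible point — this is immediate from constraints \eqref{eq:18} and \eqref{eq:26} — and the observation that the $\max(\cdot, 0)$ in the definition of $\delta^1_{in}$ is there precisely to keep the constant nonnegative so that \eqref{eq:bigm1} is not spuriously strengthened on $y_{in}=1$. I would note in passing that ``equivalent'' is meant in the sense that, over the polytope defined by the other (bound) constraints, the feasible $(d,f,y)$ points of \eqref{eq:bigm1} and of \eqref{eq:16} coincide.
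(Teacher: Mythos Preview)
Your proposal is correct and follows essentially the same two-case argument as the paper: verify that \eqref{eq:bigm1} reduces to \eqref{eq:16} when $y_{in}=1$, and that it is redundant when $y_{in}=0$ by invoking the bounds $d_i\le d_i^u$, $f_i^{t(i)}\le u_i^{t(i)}$, and $d_n\ge d_n^\ell$. Your write-up is in fact somewhat more explicit than the paper's, spelling out the role of the arrival-time definition \eqref{eq:1} and of the $\max(\cdot,0)$ in the definition of $\delta^1_{in}$.
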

\begin{proof}
For any  $i \in E_{I}$, $n \in N_O$, if $ y_{in} =1$, then constraint \eqref{eq:bigm1} is same as the \eqref{eq:16}. Otherwise, \eqref{eq:bigm1} reduces to the redundant inequality $d_i^u - d_i +   u_i^{t(i)} -  f_i^{t(i)} + d_n - d_n^l \geq 0$ since $u_i^{t(i)}$ is an upper bound for $f_i^{t(i)}$, $d_n^\ell$ is an lower bound for  $d_n$, and $d_i^u$ is an upper bound for $d_i$.
\end{proof}

The rest of the inequalities are stated without proof as they are similar to the one above.
\begin{proposition}
For all $ i \in E_{O}, n \in N_I$, inequality 
\allowdisplaybreaks
\begin{eqnarray}
d_i - d_n - \sum_{t \in T} f_{n}^{t} \geq \tau_{n}^{t(i)} +  \eta_{n}  -  \delta^2_{ni} \left(1 - y_{ni} \right), \label{eq:bigm2}
\end{eqnarray}
where $\delta^2_{ni} := \max \left(d_n^u + \max_{t \in T} u_n^{t}  + \tau_{n}^{t(i)} +  \eta_{n} - d_i^\ell,0\right)$,
is equivalent to \eqref{eq:15}.

\end{proposition}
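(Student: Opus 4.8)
The plan is to follow the same two-case template used for \eqref{eq:bigm1}. First I would substitute the arrival-time identity \eqref{eq:2}, namely $a_n = d_n + \sum_{t\in T} f_n^t + \eta_n$, into the ``then'' clause of \eqref{eq:15}, so that $a_n + \tau_n^{t(i)} \le d_i$ becomes exactly $d_i - d_n - \sum_{t\in T} f_n^t \ge \tau_n^{t(i)} + \eta_n$. The claim is then that \eqref{eq:bigm2} is the Big-M linearization of ``$y_{ni}=1 \Rightarrow$ [this inequality]'' with the constant $\delta^2_{ni}$ chosen just large enough to make the $y_{ni}=0$ case vacuous.

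When $y_{ni}=1$ the term $\delta^2_{ni}(1-y_{ni})$ vanishes and \eqref{eq:bigm2} reduces verbatim to the inequality derived above; conversely if \eqref{eq:15} holds its conclusion gives \eqref{eq:bigm2}. So on the $y_{ni}=1$ slice the two formulations coincide.

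When $y_{ni}=0$, inequality \eqref{eq:bigm2} reads $d_i - d_n - \sum_{t\in T} f_n^t \ge \tau_n^{t(i)} + \eta_n - \delta^2_{ni}$, and I would show this is implied by the bound constraints, hence redundant. The one point that needs a word — and the reason this constant differs in form from $\delta^1_{in}$ — is the term $\sum_{t\in T} f_n^t$: by \eqref{eq:27} exactly one $z_n^{t'}=1$, and then by \eqref{eq:19} we have $f_n^{t'}\le u_n^{t'}\le \max_{t\in T} u_n^{t}$ while $f_n^t=0$ for $t\neq t'$, so $\sum_{t\in T} f_n^t \le \max_{t\in T} u_n^t$. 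Since the assigned type of a \emph{new} flight is itself a decision variable, the uniform bound $\max_{t\in T} u_n^t$ must replace the fixed value $u_i^{t(i)}$ used in \eqref{eq:bigm1}. Combining this with $d_n \le d_n^u$ and $d_i \ge d_i^\ell$ from \eqref{eq:26} yields $d_i - d_n - \sum_{t\in T} f_n^t \ge d_i^\ell - d_n^u - \max_{t\in T} u_n^t$, and this lower bound is $\ge \tau_n^{t(i)} + \eta_n - \delta^2_{ni}$ precisely by the definition $\delta^2_{ni} := \max(d_n^u + \max_{t\in T} u_n^t + \tau_n^{t(i)} + \eta_n - d_i^\ell,\, 0)$; the truncation at $0$ only ensures $\delta^2_{ni}\ge 0$ so the $y_{ni}=1$ case is never weakened. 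Thus every feasible point with $y_{ni}=0$ satisfies \eqref{eq:bigm2}, and the equivalence with \eqref{eq:15} follows.

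There is no real obstacle here — the argument is mechanical, mirroring the preceding proposition — which is exactly why the paper states it without proof. The only care needed is the bookkeeping on $\sum_{t\in T} f_n^t$ via the ``exactly one type'' constraint and on the sign convention in $\delta^2_{ni}$; neither requires any estimate beyond the explicit variable bounds already present in the model.
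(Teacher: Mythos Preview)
Your proposal is correct and follows exactly the two-case template of the paper's proof of the preceding proposition; indeed the paper omits this proof with the remark that it ``is similar to the one above.'' The only minor bookkeeping point is that constraint \eqref{eq:27} is stated for $n\in N_O$, so for $n\in N_I$ the ``exactly one type'' conclusion comes via \eqref{eq:8} together with \eqref{eq:27}; the bound $\sum_{t\in T} f_n^t \le \max_{t\in T} u_n^t$ then follows as you argue.
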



\begin{proposition} 
	For $(i, j) \in C_E$, inequality 
\allowdisplaybreaks
\begin{eqnarray}
d_{j} - d_i - f_i^{t(i)} \geq \tau_{i}^{t(i)}+ \eta_i - \delta^3_{ij} \left( \sum_{n \in N_O} y_{in} \right), \label{eq:bigm3}
\end{eqnarray}
where $\delta^3_{ij} := \max\left(d_i^u + u_i^{t(i)} + \tau_{i}^{t(i)} + \eta_i - d_j^\ell,0 \right)$,  is equivalent to  \eqref{eq:17}.
\end{proposition}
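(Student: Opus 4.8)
The plan is to follow exactly the template of the proof given above for the equivalence of \eqref{eq:bigm1} and \eqref{eq:16}: the logical constraint \eqref{eq:17} is governed by a single binary selector, here $\sum_{n \in N_O} y_{in}$, and by constraint \eqref{eq:4a} this selector takes only the values $0$ and $1$ for each $i \in E_I$. So it suffices to verify the claimed equivalence in these two cases separately, using the departure/cruise/arrival identity \eqref{eq:1} together with the box bounds \eqref{eq:18} and \eqref{eq:26}.

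First I would treat the case $\sum_{n \in N_O} y_{in} = 0$. Then the right-hand side of \eqref{eq:bigm3} loses the term $\delta^3_{ij}\bigl(\sum_{n \in N_O} y_{in}\bigr)$ and the inequality becomes $d_j - d_i - f_i^{t(i)} \ge \tau_i^{t(i)} + \eta_i$. Substituting $a_i = d_i + f_i^{t(i)} + \eta_i$ from \eqref{eq:1} turns this into $a_i + \tau_i^{t(i)} \le d_j$, which is precisely the consequent of \eqref{eq:17}. Thus, exactly when \eqref{eq:17} is active, \eqref{eq:bigm3} imposes the same restriction, and conversely.

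The remaining step -- and the only one needing any care -- is to show that \eqref{eq:bigm3} is redundant when $\sum_{n \in N_O} y_{in} = 1$, so that it imposes nothing precisely when \eqref{eq:17} imposes nothing. In this case \eqref{eq:bigm3} reads $d_j - d_i - f_i^{t(i)} \ge \tau_i^{t(i)} + \eta_i - \delta^3_{ij}$. If the maximum defining $\delta^3_{ij}$ equals $0$, then $d_i^u + u_i^{t(i)} + \tau_i^{t(i)} + \eta_i - d_j^\ell \le 0$, and the bounds $d_i \le d_i^u$, $f_i^{t(i)} \le u_i^{t(i)}$, $d_j^\ell \le d_j$ already give $\tau_i^{t(i)} + \eta_i \le d_j - d_i - f_i^{t(i)}$. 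Otherwise $\delta^3_{ij} = d_i^u + u_i^{t(i)} + \tau_i^{t(i)} + \eta_i - d_j^\ell$, and substituting this value reduces \eqref{eq:bigm3} to $\bigl(d_j - d_j^\ell\bigr) + \bigl(d_i^u - d_i\bigr) + \bigl(u_i^{t(i)} - f_i^{t(i)}\bigr) \ge 0$, each bracket being nonnegative by the same bounds. I do not expect a genuine obstacle here; the only thing to double-check is that $\delta^3_{ij}$ is chosen large enough for this last inequality to hold -- which is exactly the reason for its definition -- and that all the box bounds invoked are available, which they are since $(i,j) \in C_E \subseteq E \times E$.
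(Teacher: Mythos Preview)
Your proposal is correct and follows exactly the template the paper itself points to: the paper states this proposition ``without proof as [it is] similar to the one above,'' and your argument is precisely that adaptation, checking the two values of the binary selector $\sum_{n\in N_O} y_{in}$ and using the bounds \eqref{eq:18}, \eqref{eq:26} together with \eqref{eq:1}. If anything, you are slightly more careful than the paper's one-paragraph proof of the analogous Proposition for \eqref{eq:bigm1}, since you separately treat the subcase $\delta^3_{ij}=0$; one very minor remark is that \eqref{eq:4a} is stated only for $i\in E_I$, but for $(i,j)\in C_E$ with $i\notin E_I$ the sum $\sum_{n\in N_O} y_{in}$ is vacuously zero and your first case applies directly.
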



The logical constraints  \eqref{eq:16.1} - \eqref{eq:20.1}  of formulation CTC-AS are replaced with the following linear inequalities, respectively.

\allowdisplaybreaks

\begin{proposition}
For $i \in E_{I}, n \in N_O$, inequality
\begin{eqnarray}
d_n - d_i - \sum_{t \in T} f_i^t \geq \sum_{t \in T} \tau_{i}^t \ z_i^t + \eta_i - \delta^4_{in}  \left(1 - y_{in} \right),  \label{eq:25.1}
\end{eqnarray}
where  $\delta^4_{in}:= \max\left( d_i^u + \max_{t \in T }u_i^{t} + \max_{t \in T} \tau_{i}^t  + \eta_i -d_n^\ell, 0 \right)$, is equivalent to \eqref{eq:16.1}.
\end{proposition}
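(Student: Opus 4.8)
The plan is to mirror exactly the structure of the proof given for the CTC analogue (the proposition establishing equivalence of \eqref{eq:bigm1} and \eqref{eq:16}), since \eqref{eq:25.1} is the swap-model counterpart of \eqref{eq:bigm1} with the single-type turnaround term $\tau_i^{t(i)}$ replaced by the aggregated expression $\sum_{t\in T}\tau_i^t z_i^t$ and the single cruise-time variable $f_i^{t(i)}$ replaced by $\sum_{t\in T} f_i^t$. First I would split on the value of $y_{in}$. When $y_{in}=1$, the term $\delta^4_{in}(1-y_{in})$ vanishes and \eqref{eq:25.1} becomes $d_n - d_i - \sum_{t\in T}f_i^t \ge \sum_{t\in T}\tau_i^t z_i^t + \eta_i$, i.e. $a_i + \sum_{t\in T}\tau_i^t z_i^t \le d_n$ after substituting the arrival-time definition \eqref{eq:4.1} ($a_i = d_i + \sum_{t\in T}f_i^t + \eta_i$); this is precisely the conclusion of the ``if-then'' constraint \eqref{eq:16.1}.

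Second, for the case $y_{in}=0$ I would show that \eqref{eq:25.1} is implied by the variable bounds already present in the model, hence adds nothing. Substituting $y_{in}=0$, the inequality reads
\begin{equation*}
d_n - d_i - \sum_{t\in T} f_i^t - \sum_{t\in T}\tau_i^t z_i^t - \eta_i \ge -\delta^4_{in},
\end{equation*}
equivalently $\delta^4_{in} \ge d_i + \sum_{t\in T} f_i^t + \sum_{t\in T}\tau_i^t z_i^t + \eta_i - d_n$. Now I would bound the right-hand side from above term by term: $d_i \le d_i^u$ by \eqref{eq:22.1}; $\sum_{t\in T} f_i^t \le \max_{t\in T} u_i^{t}$ since \eqref{eq:21.1} forces $f_i^t \le u_i^t z_i^t$ and exactly one $z_i^t$ equals $1$ (this holds for $i\in E$ because the original schedule assigns a unique type, as enforced through the swap/assignment constraints); $\sum_{t\in T}\tau_i^t z_i^t \le \max_{t\in T}\tau_i^t$ by the same single-type argument; and $-d_n \le -d_n^\ell$ by \eqref{eq:22.1}. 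Summing these gives exactly $d_i^u + \max_{t\in T} u_i^{t} + \max_{t\in T}\tau_i^t + \eta_i - d_n^\ell$, which is $\le \delta^4_{in}$ by the definition of $\delta^4_{in}$ (the $\max(\cdot,0)$ only makes it larger). Hence the inequality is redundant, and conversely any feasible point of \eqref{eq:16.1} satisfies \eqref{eq:25.1} trivially in this case.

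The only mild subtlety — and the place I would be most careful — is justifying that $\sum_{t\in T} f_i^t$ and $\sum_{t\in T}\tau_i^t z_i^t$ collapse to single-type quantities for existing flights $i\in E$, i.e. that $\sum_{t\in T} z_i^t = 1$ is in force for $i\in E$ in the CTC-AS model. This follows because $z_i^t$ for existing flights inherits a unique value from the original assignment $\chi$ through the chain of swap constraints \eqref{eq:9.1}–\eqref{eq:13.1} together with $\sum_{t\in T}z_i^t=1$ being maintained on every route; alternatively one can simply note that even without uniqueness the bounds $\sum_t f_i^t \le \sum_t u_i^t z_i^t$ and $\sum_t \tau_i^t z_i^t$ are each dominated by $\max_t u_i^t$ and $\max_t\tau_i^t$ respectively only when at most one $z_i^t$ is nonzero, so I would state explicitly that the model enforces a single active type per existing flight. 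Given the paper's remark that ``the rest of the inequalities are stated without proof as they are similar,'' I expect the intended proof to be one or two sentences: $y_{in}=1$ recovers \eqref{eq:16.1}, and $y_{in}=0$ yields a redundant inequality by the choice of $\delta^4_{in}$.
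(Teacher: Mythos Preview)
Your proposal is correct and matches the paper's approach: the paper does not give an explicit proof of this proposition but states that it follows the pattern of the proof of the CTC analogue (Proposition for \eqref{eq:bigm1}), namely splitting on $y_{in}\in\{0,1\}$, recovering \eqref{eq:16.1} when $y_{in}=1$, and showing redundancy via the variable bounds when $y_{in}=0$. Your additional care in justifying that $\sum_{t\in T} z_i^t=1$ for existing flights (so that $\sum_t f_i^t$ and $\sum_t \tau_i^t z_i^t$ are bounded by $\max_t u_i^t$ and $\max_t \tau_i^t$) is a worthwhile observation that the paper leaves implicit.
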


\begin{proposition}
For $i \in E_{O}, n \in N_I$, inequality

\begin{eqnarray}
d_i - d_n - \sum_{t \in T} f_{n}^t \geq \sum_{t \in T} \tau_{n}^t  z_{n}^t +  \eta_{n}  -  \delta^5_{ni}  \left(1 - y_{ni} \right), \label{eq:26.1}
\end{eqnarray}
where $\delta^5_{ni}:=\max\left(d_n^u +  \max_{t \in T }u_n^{t} + \max_{t \in T } \tau_{n}^t +  \eta_{n} - d_i^\ell, 0  \right)$, is equivalent to \eqref{eq:18.1}.
\end{proposition}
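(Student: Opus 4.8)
The plan is to follow the same two-case template used in the proof of the proposition establishing \eqref{eq:bigm1}, adapted to the CTC-AS setting where the turnaround term $\sum_{t\in T}\tau_n^t z_n^t$ and the cruise term $\sum_{t\in T}f_n^t$ are aggregated over aircraft types. First I would substitute out $a_n$ using the arrival-time identity \eqref{eq:4.1}, namely $a_n=d_n+\sum_{t\in T}f_n^t+\eta_n$, so that the ``if-then'' constraint \eqref{eq:18.1} reads: whenever $y_{ni}=1$, $d_i-d_n-\sum_{t\in T}f_n^t\ge\sum_{t\in T}\tau_n^t z_n^t+\eta_n$. Comparing this with \eqref{eq:26.1}, the case $y_{ni}=1$ is immediate: the term $\delta^5_{ni}(1-y_{ni})$ vanishes and \eqref{eq:26.1} becomes exactly the above consequence of \eqref{eq:18.1}, so the two constraints coincide.

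The work is in the case $y_{ni}=0$, where \eqref{eq:18.1} imposes no restriction and I must show that \eqref{eq:26.1} is automatically satisfied by the bounds already present in the model, i.e. that it is redundant. Rewriting \eqref{eq:26.1} with $y_{ni}=0$ as $d_i-d_n-\sum_{t\in T}f_n^t-\sum_{t\in T}\tau_n^t z_n^t-\eta_n+\delta^5_{ni}\ge 0$, I would bound each variable term from the appropriate side: $d_i\ge d_i^\ell$ and $d_n\le d_n^u$ from \eqref{eq:22.1}; and, crucially, since the single-assignment constraints \eqref{eq:7.1}--\eqref{eq:8.1} force $\sum_{t\in T}z_n^t=1$ while \eqref{eq:21.1} gives $\ell_n^t z_n^t\le f_n^t\le u_n^t z_n^t$, at most one $z_n^t$ is nonzero, so $\sum_{t\in T}f_n^t\le\max_{t\in T}u_n^t$ and $\sum_{t\in T}\tau_n^t z_n^t\le\max_{t\in T}\tau_n^t$. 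Plugging these in, the left-hand side is at least $d_i^\ell-d_n^u-\max_{t\in T}u_n^t-\max_{t\in T}\tau_n^t-\eta_n+\delta^5_{ni}$, which is nonnegative by the very definition $\delta^5_{ni}=\max(d_n^u+\max_{t\in T}u_n^t+\max_{t\in T}\tau_n^t+\eta_n-d_i^\ell,\,0)$ --- the outer $\max$ with $0$ also covering the case where the bracketed quantity is itself negative. Hence \eqref{eq:26.1} holds vacuously, and since every other constraint is shared between the two formulations, they have the same feasible region.

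The main obstacle --- really the only non-routine point --- is recognizing that $\sum_{t\in T}f_n^t$ and $\sum_{t\in T}\tau_n^t z_n^t$ must be bounded using $\sum_{t\in T}z_n^t=1$ rather than by the naive sum of per-type upper bounds; invoking \eqref{eq:7.1}--\eqref{eq:8.1} is precisely what keeps $\delta^5_{ni}$ tight. Everything else is the same bookkeeping as in the Big-M propositions already proved, so the statement follows; as the text does for the companion propositions, I would note that the argument is analogous and omit the repeated details if a shorter write-up is preferred.
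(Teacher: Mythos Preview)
Your proposal is correct and follows exactly the two-case template the paper uses for the proved Big-M proposition (\eqref{eq:bigm1}); indeed, the paper omits the proof of this proposition precisely because it is analogous. Your additional care in invoking \eqref{eq:7.1}--\eqref{eq:8.1} to justify $\sum_{t\in T}z_n^t=1$ for $n\in N_I$, and hence the bounds $\sum_{t\in T}f_n^t\le\max_{t\in T}u_n^t$ and $\sum_{t\in T}\tau_n^t z_n^t\le\max_{t\in T}\tau_n^t$, is the right way to make the redundancy argument rigorous in the CTC-AS setting.
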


\ignore{
We restate the constraints \ref{eq:19.1} and \ref{eq:20.1} in Proposition \ref{proof} and Proposition \ref{proof2}, respectively, then we linearize them using the Big-M method.


\begin{proposition}
	\label{proof}
	For $(i, j) \in C_E$,
	inequalities \eqref{eq:19.1}  can be restated as
	\begin{eqnarray}
	\text{If } \sum_{k \in S(j)} \! s_{jk} = 0, \text{then }  d_i \! + \! \sum_{t \in T} f_i^t \! + \! \eta_i \! +  \! \sum_{t \in T} \tau_{i}^t  z_i^t   \leq d_{j}. \label{eq:nonlinear}
	\end{eqnarray}
\end{proposition}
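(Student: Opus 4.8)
The plan is to read \eqref{eq:nonlinear} as nothing more than \eqref{eq:19.1} with the arrival-time variable $a_i$ eliminated through the flow-balance equation \eqref{eq:4.1}. So the first thing I would do is recall that the CTC-AS model imposes, for every $i \in E \cup N$, the equality $a_i = d_i + \sum_{t \in T} f_i^t + \eta_i$ coming from \eqref{eq:4.1}; in particular this holds for the flight $i$ of the pair $(i,j)\in C_E$. Since \eqref{eq:4.1} is an unconditional equality that is part of the formulation, it may be substituted into any other constraint without changing the feasible set, and this is exactly the substitution that turns \eqref{eq:19.1} into \eqref{eq:nonlinear}.

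Next I would carry out the substitution and check both directions of the equivalence. The antecedent of the implication is the same condition $\sum_{k \in S(j)} s_{jk} = 0$ on the swap variables in \eqref{eq:19.1} and in \eqref{eq:nonlinear}, and it is untouched by the substitution, so it suffices to compare the two consequents. For any feasible point satisfying \eqref{eq:4.1}, the consequent $a_i + \sum_{t\in T}\tau_i^t z_i^t \le d_j$ of \eqref{eq:19.1} becomes, upon replacing $a_i$ by $d_i + \sum_{t\in T} f_i^t + \eta_i$, precisely the consequent $d_i + \sum_{t\in T} f_i^t + \eta_i + \sum_{t\in T}\tau_i^t z_i^t \le d_j$ of \eqref{eq:nonlinear}; reading the replacement the other way gives the converse. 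Hence, in the presence of \eqref{eq:4.1}, constraints \eqref{eq:19.1} and \eqref{eq:nonlinear} cut out the same feasible region, which is exactly the claimed restatement.

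I do not expect a genuine obstacle here; the two points that deserve a word of care are that the equivalence is meant within the full model (it relies on \eqref{eq:4.1} being enforced, not on \eqref{eq:19.1} in isolation) and that the logical ``if--then'' structure is preserved because only the consequent is rewritten while the swap-variable antecedent is left alone. The substantive work lies in the step that follows this proposition but is outside its scope: linearizing the implication \eqref{eq:nonlinear} by the Big-M method with a constant assembled from the implied bounds $d_i \le d_i^u$, $f_i^t \le \max_{t\in T} u_i^t$, $\tau_i^t \le \max_{t\in T}\tau_i^t$ and $d_j \ge d_j^\ell$, in exactly the manner already used for \eqref{eq:bigm1}--\eqref{eq:bigm3} and \eqref{eq:25.1}--\eqref{eq:26.1}.
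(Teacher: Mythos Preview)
Your argument is correct for the proposition as literally stated: since \eqref{eq:4.1} forces $a_i = d_i + \sum_{t\in T} f_i^t + \eta_i$ for every $i\in E\cup N$, substituting this identity into the consequent of \eqref{eq:19.1} yields exactly the consequent of \eqref{eq:nonlinear}, and the antecedent is unchanged. Nothing more is needed.

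The paper's own proof, however, takes a noticeably different route. Rather than invoking \eqref{eq:4.1} directly, it fixes $(i,j)\in C_E$ with $\sum_{k\in S(j)} s_{jk}=0$ and performs a case split on whether a new flight is inserted after $i$. In the case $\sum_{n\in N_O} y_{in}=0$ the two constraints are declared identical; in the case $y_{in^*}=1$ for some $n^*\in N_O$, the paper chains constraints \eqref{eq:16.1}, \eqref{eq:17.1}, \eqref{eq:14.1} (which forces $y_{m^*j}=1$ because no swap occurs at $j$), and \eqref{eq:18.1} to deduce $d_i + \sum_{t} f_i^t + \eta_i + \sum_{t}\tau_i^t z_i^t \le d_{n^*} \le d_{m^*} \le d_j$. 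In other words, the paper is verifying that the inequality in \eqref{eq:nonlinear} is \emph{implied by the rest of the model} even when a new flight pair sits between $i$ and $j$; this is really an argument for why the antecedent of \eqref{eq:19.1} can be taken as $\sum_{k\in S(j)} s_{jk}=0$ alone, without also requiring $\sum_{n\in N_O} y_{in}=0$ as in the corresponding CTC constraint \eqref{eq:17}. Your substitution proof establishes the restatement cleanly and is all that the proposition demands; the paper's case analysis buys an additional modeling insight (the $y$-condition is redundant here), at the cost of a longer argument.
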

\begin{proof}
For any $(i,j) \in C_E$, if $\sum_{k \in S(j)} s_{jk}=0$, there are two cases: if $\sum_{n \in N_{O}} y_{in} =0$, then constraint  \eqref{eq:nonlinear} is same as the constraint \eqref{eq:19.1}; otherwise, if $\sum_{n \in N_{O}} y_{in} =1$, then there exists $n^* \in N_O$ such that $y_{in^*}=1$. For feasibility, from constraint \eqref{eq:16.1}, we need $d_i + \sum_{t \in T} f_i^t + \eta_i + \sum_{t \in T} \tau_{i}^t  z_i^t  \leq d_{n^*}$. Similarly, from constraint \eqref{eq:17.1}, we need   $d_{n^*}+ \sum_{t \in T} f_{n^*}^t + \eta_{n^*} + \sum_{t \in T} \tau_{n^*}^t  z_{n*}^t \leq d_{m^*}$ for consecutive new flights $(n^*, m^*)$. Since $\sum_{k \in S(j)} s_{jk} = 0$, constraint \eqref{eq:14.1} implies that $y_{in^*}=y_{m^*j}=1$. Therefore, from constraint \eqref{eq:18.1}, we need $d_{m^*} + \sum_{t \in T} f_{m^*}^t + \eta_{m^*} +\sum_{t \in T} \tau_{m^*}^t  z_{m}^t   \leq d_j$, which implies that $d_i + \sum_{t \in T} f_i^t  + \eta_i +   \sum_{t \in T} \tau_{i}^t  z_i^t   \leq d_{j}$.

\end{proof}

The following restatement is provided without proof as it is similar to the one above.

\begin{proposition}
\label{proof2}
For $k \in S(j), (i, j) \in C_E$, inequalities \eqref{eq:20.1} can be restated as
\begin{eqnarray}
\text{If } s_{jk}   = 1, \text{then }   d_i \! + \! \sum_{t \in T} f_i^t \! + \! \eta_i \!  + \!  \sum_{t \in T} \tau_{i}^t  z_i^t   \leq  d_{k}. \label{eq:nonlinear2}
\end{eqnarray}
\end{proposition}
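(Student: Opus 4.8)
The plan is to follow the proof of Proposition~\ref{proof}, since \eqref{eq:20.1} governs the swapped successor $k$ in exactly the way \eqref{eq:19.1} governs the unswapped successor $j$. Fix $(i,j)\in C_E$ and $k\in S(j)$, and note that $i=p(j)$. If $s_{jk}=0$ both statements are vacuous, so assume $s_{jk}=1$. One direction is immediate: using the arrival-time identity \eqref{eq:4.1}, $a_i=d_i+\sum_{t\in T}f_i^t+\eta_i$, the inequality in \eqref{eq:nonlinear2} becomes the ``then'' clause of \eqref{eq:20.1} verbatim. The content of the proposition is the converse, namely that \eqref{eq:nonlinear2} holds in every assignment feasible for the remaining constraints; I would establish this by splitting on whether a new outbound flight is inserted immediately after $i$.

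If $\sum_{n\in N_O}y_{in}=0$ (which is automatic when $i\notin E_I$), then $s_{jk}=1$ makes flight $k$ the immediate successor of flight $i$ in the rerouted schedule, so \eqref{eq:20.1} is enforced and \eqref{eq:4.1} rewrites it as \eqref{eq:nonlinear2}. If instead $\sum_{n\in N_O}y_{in}=1$, let $n^{*}\in N_O$ be the unique new outbound flight (uniqueness by \eqref{eq:4a}) with $y_{in^{*}}=1$ and let $m^{*}$ be its paired return flight, so $(n^{*},m^{*})\in C_N$. Because $s_{jk}=1$, the coupling constraint \eqref{eq:15.1} forces $y_{in^{*}}=y_{m^{*}k}$, hence $y_{m^{*}k}=1$: the new pair is threaded between $i$ and $k$ along the swapped route $i\to n^{*}\to m^{*}\to k$. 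I would then work along this route, each time eliminating arrival times with \eqref{eq:4.1}: from \eqref{eq:16.1}, $d_i+\sum_{t\in T}f_i^t+\eta_i+\sum_{t\in T}\tau_i^t z_i^t\le d_{n^{*}}$; from \eqref{eq:17.1}, $d_{n^{*}}+\sum_{t\in T}f_{n^{*}}^t+\eta_{n^{*}}+\sum_{t\in T}\tau_{n^{*}}^t z_{n^{*}}^t\le d_{m^{*}}$; and from \eqref{eq:18.1}, which applies since $y_{m^{*}k}=1$, $d_{m^{*}}+\sum_{t\in T}f_{m^{*}}^t+\eta_{m^{*}}+\sum_{t\in T}\tau_{m^{*}}^t z_{m^{*}}^t\le d_k$. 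Chaining these three and dropping the nonnegative cruise-time, non-cruise-time, and turnaround terms attached to $n^{*}$ and $m^{*}$ leaves $d_i+\sum_{t\in T}f_i^t+\eta_i+\sum_{t\in T}\tau_i^t z_i^t\le d_k$, which is exactly \eqref{eq:nonlinear2}.

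The step I expect to need genuine care is the bookkeeping in the second case. It is essential to invoke \eqref{eq:15.1} rather than \eqref{eq:14.1}: since $s_{jk}=1$ we have $\sum_{k'\in S(j)}s_{jk'}\ge 1$, so \eqref{eq:14.1} is slack, and it is \eqref{eq:15.1} that certifies the inserted pair lands between $i$ and $k$ rather than between $i$ and $j$. One also has to confirm that every term dropped in the chain is nonnegative, which holds because $\eta$ and the turnaround parameters $\tau^t$ are nonnegative and the cruise times are nonnegative by the bounds \eqref{eq:21.1}. Everything else is the same routine substitution via \eqref{eq:4.1} used in Proposition~\ref{proof}.
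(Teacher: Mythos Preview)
Your proposal is correct and follows precisely the approach the paper indicates: it mirrors the proof of Proposition~\ref{proof}, splitting on whether a new outbound flight is inserted after $i$, and in the inserted case chains \eqref{eq:16.1}, \eqref{eq:17.1}, \eqref{eq:18.1} after invoking the appropriate coupling constraint. Your observation that \eqref{eq:15.1} (rather than \eqref{eq:14.1}) is the relevant coupling constraint here, because $s_{jk}=1$, is exactly the adaptation needed and is the only substantive change from the proof of Proposition~\ref{proof}.
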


Using the Proposition \ref{proof} and Proposition \ref{proof2}, the logical constraints  \eqref{eq:19.1} and  \eqref{eq:20.1} of formulation CTC-AS are replaced with the following linear inequalities, respectively.
}

\begin{proposition}
For $(i, j) \in C_E$, inequality
\begin{eqnarray}
d_{j} - d_i - \sum_{t \in T} f_i^t \geq \sum_{t \in T} \tau_{i}^t  z_i^t + \eta_i - \delta^6_{ij}  \left( \sum_{k \in S(j)} s_{jk} \right), \label{eq:27.11}
\end{eqnarray}
where $ \delta^6_{ij}:= \max\left(d_i^u +  \max_{t \in T }u_i^{t} +  \max_{t \in T } \tau_{i}^t + \eta_i - d_j^\ell, 0 \right)$, is equivalent to \eqref{eq:19.1}.
\end{proposition}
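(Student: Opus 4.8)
The plan is to verify equivalence of the Big-M inequality \eqref{eq:27.11} with the logical constraint \eqref{eq:19.1} by the standard case split on the value of the indicator $\sum_{k \in S(j)} s_{jk}$, exactly as in the proof of the first proposition of this subsection. Note that $\sum_{k \in S(j)} s_{jk} \in \{0,1\}$ because constraint \eqref{eq:12.1} allows at most one swap per route and each flight lies on a single route; I would state this observation first so that the two cases below are exhaustive.

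First I would treat the case $\sum_{k \in S(j)} s_{jk} = 0$. Then the right-hand side of \eqref{eq:27.11} is just $\sum_{t \in T} \tau_i^t z_i^t + \eta_i$, so \eqref{eq:27.11} reads $d_j - d_i - \sum_{t \in T} f_i^t \geq \sum_{t \in T} \tau_i^t z_i^t + \eta_i$, which after rearranging and using $a_i = d_i + \sum_{t\in T} f_i^t + \eta_i$ (constraint \eqref{eq:4.1}) is precisely the consequent $a_i + \sum_{t \in T} \tau_i^t z_i^t \le d_j$ of \eqref{eq:19.1}. So in this case the two constraints impose the same condition.

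Second I would treat the case $\sum_{k \in S(j)} s_{jk} = 1$, where \eqref{eq:19.1} imposes nothing and I must show \eqref{eq:27.11} is redundant. Substituting, \eqref{eq:27.11} becomes
\begin{equation*}
d_j - d_i - \sum_{t \in T} f_i^t - \sum_{t \in T} \tau_i^t z_i^t - \eta_i \geq - \delta^6_{ij},
\end{equation*}
i.e. $\delta^6_{ij} \geq d_i + \sum_{t \in T} f_i^t + \sum_{t \in T} \tau_i^t z_i^t + \eta_i - d_j$. I would bound the right-hand side from above term by term using the variable bounds in the model: $d_i \le d_i^u$ by \eqref{eq:22.1}; $\sum_{t\in T} f_i^t \le \max_{t\in T} u_i^t$ since the $z_i^t$ sum to one (by \eqref{eq:8.1}, for $i\in E$ one fleet type is active) and $f_i^t \le u_i^t z_i^t$ by \eqref{eq:21.1}; $\sum_{t\in T}\tau_i^t z_i^t \le \max_{t\in T}\tau_i^t$ again since exactly one $z_i^t=1$; and $-d_j \le -d_j^\ell$ by \eqref{eq:22.1}. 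Adding these yields exactly $d_i^u + \max_{t\in T} u_i^t + \max_{t\in T}\tau_i^t + \eta_i - d_j^\ell$, which is one of the two arguments of the $\max$ defining $\delta^6_{ij}$, hence $\le \delta^6_{ij}$; the other argument, $0$, handles the degenerate situation where that bound is negative. Thus \eqref{eq:27.11} holds automatically and imposes no restriction.

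I do not expect any serious obstacle: the argument is a routine Big-M verification and mirrors the already-proven propositions in this subsection. The only point requiring a little care is the justification that $\sum_t f_i^t$ and $\sum_t \tau_i^t z_i^t$ are each bounded by the corresponding maximum over $t$ — this relies on exactly one $z_i^t$ being equal to one for an existing flight $i$, which follows from the aircraft-assignment structure of CTC-AS; I would flag this explicitly rather than leave it implicit. Since the remaining propositions of the subsection are stated without proof as ``similar to the one above,'' I would likewise keep this writeup brief and refer back to the first proposition's proof for the template.
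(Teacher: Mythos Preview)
Your proposal is correct and follows exactly the template the paper uses (and then omits for the later propositions): case-split on the binary indicator, match the logical constraint when it is active, and verify redundancy via the variable bounds when it is not. One small fix: your appeal to \eqref{eq:8.1} for ``exactly one $z_i^t=1$'' is misplaced, since \eqref{eq:8.1} concerns new flights $n\in N_O$; for existing flights $i\in E$ the single-assignment property is implicit in the CTC-AS aircraft-assignment structure rather than stated as a separate constraint, so just cite that structure (as you already do later in the paragraph) instead of \eqref{eq:8.1}.
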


\begin{proposition}
For $(i, j) \in C_E, k \in S(j)$, inequality
\begin{eqnarray}
d_k - d_i -  \sum_{t \in T} f_i^t \geq \sum_{t \in T} \tau_{i}^t  z_i^t + \eta_i - \delta^7_{ik}  \left(1 - s_{jk} \right), \label{eq:28.1}
\end{eqnarray}
where $\delta^7_{ik}:= \max\left(d_i^u +  \max_{t \in T }u_i^{t} +  \max_{t \in T } \tau_{i}^t  + \eta_i - d_k^\ell, 0   \right)$, is equivalent to \eqref{eq:20.1}.
\end{proposition}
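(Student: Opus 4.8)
The plan is to follow the same two-case template used for the proposition on \eqref{eq:bigm1}, after first rewriting \eqref{eq:20.1} so it can be compared directly with \eqref{eq:28.1}. By the arrival-time definition \eqref{eq:4.1}, $a_i = d_i + \sum_{t\in T} f_i^t + \eta_i$ for every $i \in E$; substituting this into \eqref{eq:20.1} shows that, for $(i,j)\in C_E$ and $k\in S(j)$, constraint \eqref{eq:20.1} says exactly: if $s_{jk}=1$ then $d_k - d_i - \sum_{t\in T} f_i^t \ge \sum_{t\in T}\tau_i^t z_i^t + \eta_i$. So I only need to check that \eqref{eq:28.1} enforces precisely this implication and nothing more.

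First I would dispose of the case $s_{jk}=1$. Then $1-s_{jk}=0$, the Big-M term drops out, and \eqref{eq:28.1} becomes $d_k - d_i - \sum_{t\in T} f_i^t \ge \sum_{t\in T}\tau_i^t z_i^t + \eta_i$, i.e., exactly the consequent derived above. Hence on the set $\{s_{jk}=1\}$ the linear constraint and the logical constraint cut out the same points.

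The case $s_{jk}=0$ is where the (modest) work lies. Here \eqref{eq:20.1} imposes nothing, so it suffices to show that \eqref{eq:28.1} is then automatically satisfied, hence redundant. Rearranging \eqref{eq:28.1} with $s_{jk}=0$ reduces it to
\[ \delta^7_{ik} \ \ge\ d_i - d_k + \sum_{t\in T} f_i^t + \sum_{t\in T}\tau_i^t z_i^t + \eta_i, \]
and I would bound the right-hand side termwise: $d_i\le d_i^u$ and $d_k\ge d_k^\ell$ by \eqref{eq:22.1}; and, using that exactly one aircraft type $t^\ast$ is assigned to flight $i$ (so $z_i^{t^\ast}=1$ and all other $z_i^t=0$), $\sum_{t\in T} f_i^t = f_i^{t^\ast}\le u_i^{t^\ast}\le \max_{t\in T} u_i^t$ by \eqref{eq:21.1} and $\sum_{t\in T}\tau_i^t z_i^t = \tau_i^{t^\ast}\le \max_{t\in T}\tau_i^t$. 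Adding these, the right-hand side is at most $d_i^u + \max_{t\in T} u_i^t + \max_{t\in T}\tau_i^t + \eta_i - d_k^\ell$, which is in turn $\le \delta^7_{ik}$ since $\delta^7_{ik}$ is by definition the maximum of that quantity and $0$. Thus \eqref{eq:28.1} holds for free when $s_{jk}=0$, and combining the two cases gives the claimed equivalence of feasible sets.

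The step I expect to be the main obstacle — or rather the only point needing care — is the termwise bound in the $s_{jk}=0$ case: the sums $\sum_{t\in T} f_i^t$ and $\sum_{t\in T}\tau_i^t z_i^t$ must be collapsed onto a single aircraft type via the assignment constraints before the maxima appearing in $\delta^7_{ik}$ can be invoked, and the constant has been chosen precisely so that it dominates this worst case. Everything else mirrors the proof of the proposition for \eqref{eq:bigm1}, so, as with the intermediate propositions, this one could equally well be stated as ``similar to the one above.''
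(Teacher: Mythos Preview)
Your proposal is correct and follows exactly the two-case template the paper uses for the first Big-M proposition; indeed the paper omits the proof of this proposition, noting it is ``similar to the one above.'' Your additional care in collapsing the sums $\sum_{t\in T} f_i^t$ and $\sum_{t\in T}\tau_i^t z_i^t$ onto a single aircraft type via the assignment structure before invoking the maxima in $\delta^7_{ik}$ is the right adaptation needed when moving from CTC (where $t(i)$ is fixed) to CTC-AS (where $z_i^t$ is a decision).
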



\REV{{We provide the reformulation with Big-M and the hyperbolic inequalities, which can be written as conic quadratic inequalities, below.}}

\allowdisplaybreaks

\begin{eqnarray}
\text{max}&& \sum_{t \in T} \sum_{i \in N} \pi_i  \text{ min} \left(\mu_i, \kappa^t \right)  z_i^t  -\sum_{i \in E} \rho g_i - \phi    \nonumber \\
&& -\sum_{i \in E} \left( \sum_{t \in T} c_o  \left(\alpha_i^t  p_i^t + \beta_i^t  q_i^t + \gamma_i^t  r_i^t + \nu_i^t  h_i^t\right) - c_i^{t(i)}(u_i^{t(i)}) \right)  \nonumber \\
&& - \sum_{t \in T} \sum_{i \in N} c_o  \left(\alpha_i^t   p_i^t + \beta_i^t  q_i^t + \gamma_i^t  r_i^t + \nu_i^t  h_i^t\right) \nonumber \\
&& - \sum_{t \in T} \sum_{i \in E \cup N} \sigma_{i}  \text{ max} \left(0, \mu_i - \kappa^t \right)  z_i^t    - \sum_{i \in E} \sum_{j \in S(i)} \left(\psi/2\right)  s_{ij} \nonumber
\end{eqnarray}
\begin{align}
& \text{s.t.} \quad \text{Const.} \quad \eqref{eq:4}-\eqref{eq:4b},
\eqref{eq:5.1}-\eqref{eq:17.1},
\eqref{eq:3.2}-\eqref{eq:newdeviation1}  && \nonumber \\
& {\left(z_i^t\right)}^2        \leq     p_i^t  f_i^t &&   i \in {E \cup N}, t \in T   \label{eq:34}\\
&{\left(z_i^t\right)}^4        \leq     {\left(f_i^t\right)}^2  q_i^t  z_{i}^t && i \in {E \cup N}, t \in T  \label{eq:35}\\
&{\left(f_i^t\right)}^4        \leq     {\left(z_i^t\right)}^2 r_i^t  f_i^t && i \in {E \cup N}, t \in T  \label{eq:36}\\
&{\left(f_i^t\right)}^2        \leq     h_i^t  z_i^t  &&  i \in {E \cup N}, t \in T  \label{eq:37}\\
&b_{i}^{4} \leq g_{i}^{2} b_{i}  && i \in E \label{eq:conicdelay}\\
&d_n - d_i - \sum_{t \in T} f_i^t \geq \sum_{t \in T} \tau_{i}^t \ z_i^t + \eta_i - \delta^4_{in}  \left(1 - y_{in} \right) && i \in E_{I}, n \in N_O \label{eq:500}\\
&d_i - d_n - \sum_{t \in T} f_{n}^t \geq \sum_{t \in T} \tau_{n}^t  z_{n}^t +  \eta_{n}  -  \delta^5_{ni}  \left(1 - y_{ni} \right) && i \in E_{O}, n \in N_I \label{eq:501}\\
&d_{j} - d_i - \sum_{t \in T} f_i^t \geq \sum_{t \in T} \tau_{i}^t  z_i^t + \eta_i - \delta^6_{ij}  ( \sum_{k \in S(j)} s_{jk} ) &&  (i, j) \in C_E \label{eq:502}\\
&d_k - d_i -  \sum_{t \in T} f_i^t \geq \sum_{t \in T} \tau_{i}^t  z_i^t + \eta_i - \delta^7_{ik}  \left(1 - s_{jk} \right) &&(i, j) \in C_E,  k \in S(j) \label{eq:503}
\end{align}

\REV{{Objective function is slightly different than the original objective function of the proposed model for CTC-AS in Section 3.2. The original objective is represented by the new objective and constraints \eqref{eq:34} - \eqref{eq:conicdelay}, which can be restated by conic quadratic inequalities. Moreover, logical constraints (39)-(42) are represented using the BigM inequalities \eqref{eq:500} - \eqref{eq:503}. The remaining
constraints are same as the original constraints of the proposed model for CTC-AS.}}

As an alternative to the Big-M method, Mannino et. al. \cite{mannino2018hotspot} use the Path\&Cycle Algorithm. In their experiments, the Big-M formulation is slowed down due to the the weak bounds on optimality compared to the Path\&Cycle formulation. As another alternative to the Big-M formulation, we introduce stronger McCormick estimators in the next section.

\subsection{Improved reformulation with McCormick inequalities}
\label{McCormick}
In this section, we improve and strengthen the formulations by using McCormick estimators to represent the logical constraints.
We will demonstrate the construction of McCormick inequalities only for constraints \eqref{eq:19.1}.


Let us define auxiliary variables $v_i = d_i + \sum_{t \in T} f_i^t + \eta_i + \sum_{t \in T} \tau_{i}^t  z_i^t$. 
We can state inequality \eqref{eq:19.1} as
\[
d_j \ge v_i \bigg (1 - \sum_{k \in S(j)} s_{jk} \bigg), \ \ (i,j) \in C_E.
\]

The problem formulation can be strengthened using linear inequalities based on McCormick estimators for the bilinear terms $w_{ij} = v_i  (1 - \sum_{k \in S(j)} s_{jk}  ),$ $(i,j) \in C_E.$ To do so, note the valid upper and lower bounds on $v_i$:
\begin{eqnarray}
v_i^{u} := d_i^{u} + \max_{t \in T} u_i^{t} + \max_{t \in T} \tau_i^{t} + \eta_i \geq v_i \nonumber\\
v_i^{\ell} := d_i^{\ell} + \min_{t \in T} \ell_i^{t} + \min_{t \in T} \tau_i^{t} + \eta_i \leq v_i. \nonumber
\end{eqnarray}

\noindent Using these bounds on $v_i$, $i \in E$, the following McCormick inequalities \cite{McCormick} are valid for each bilinear term  $\omega_{ij} = v_i \left(1 - \sum_{k \in S(j)} s_{jk} \right):$
\begin{eqnarray}
&&\omega_{ij} \leq  v_i^{u}  \bigg (1 -  \sum_{k \in S(j)} s_{jk}  \bigg)\quad\quad\qquad\qquad(i,j) \in C_E  \label{MC1.1}\\
&&\omega_{ij} \geq  v_i^{\ell}   \bigg (1 - \sum_{k \in S(j)} s_{jk}  \bigg)  \quad\quad\qquad\qquad\, (i,j) \in C_E \\
&&\omega_{ij} \leq  v_i - v_i^{\ell}  \sum_{k \in S(j)} s_{jk}  \quad\quad\quad\qquad\qquad\, (i,j) \in C_E \label{redundant}\\
&&\omega_{ij} \geq  v_i - v_i^{u} \sum_{k \in S(j)} s_{jk}  \quad\quad\quad\qquad\qquad\, (i,j) \in C_E.  \label{MC4.1}
\end{eqnarray}

\noindent Therefore, constraints \eqref{eq:19.1} can be replaced with the following constraints
\begin{eqnarray}
d_j \geq \omega_{ij} \nonumber \\
\eqref{MC1.1} - \eqref{MC4.1}. \nonumber
\end{eqnarray}

Observe that constraints \eqref{redundant} do not need to be added. If $ \sum_{k \in S(j)} s_{jk} =1$,  then constraints \eqref{redundant} and \eqref{MC4.1} become redundant. On the other hand, if $ \sum_{k \in S(j)} s_{jk} =0$, then constraints  \eqref{redundant} and \eqref{MC4.1} ensure that $d_j \geq \omega_{ij}=v_i$. If constraints \eqref{redundant} are not included in the model, then we obtain  $d_j \geq \omega_{ij} \geq v_i$. By this way, we still ensure that flight $j$ cannot depart before the arrival time of flight $i$ plus its turnaround time. Therefore, the optimal solution is same as the original one.

Similarly, logical constraints \eqref{eq:16.1}, \eqref{eq:18.1}, and
\eqref{eq:20.1} can be replaced by the stronger McCormick inequalities. We provide the improved reformulation with the McCormick inequalities and hyperbolic inequalities, which can be written as conic quadratic inequalities, below.

\allowdisplaybreaks
\ignore{
\begin{align}\text{max} \quad & \sum_{t \in T} \sum_{i \in N} p_i \left( min \left(\mu_i, \kappa^t \right) \right)  z_i^t    \\
 & -\sum_{i \in E} \left( \sum_{t \in T} c_o  \left(\alpha_i^t  q_i^t + \beta_i^t  \delta_i^t + \gamma_i^t  \varphi_i^t + \nu_i^t  \vartheta_i^t\right) - c_i^{t(i)}(u_i^{t(i)}) \right)  \nonumber \\
 & - \sum_{t \in T} \sum_{i \in N} c_o  \left(\alpha_i^t   q_i^t + \beta_i^t  \delta_i^t + \gamma_i^t  \varphi_i^t + \nu_i^t  \vartheta_i^t\right) \nonumber \\
& - \sum_{t \in T} \sum_{i \in E \cup N} \sigma_{i}  \left( max \left(0, \mu_i - \kappa^t \right) \right)  z_i^t    - \sum_{i \in E} \sum_{j \in S(i)} \left(\psi  s_{ij}\right)/2  \label{eq:0.1} \\
 \text{subject to} \nonumber
\end{align}
}

\begin{eqnarray}
\text{max}&& \sum_{t \in T} \sum_{i \in N} \pi_i  \text{ min} \left(\mu_i, \kappa^t \right)  z_i^t  -\sum_{i \in E} \rho g_i - \phi    \nonumber \\
&& -\sum_{i \in E} \left( \sum_{t \in T} c_o  \left(\alpha_i^t  p_i^t + \beta_i^t  q_i^t + \gamma_i^t  r_i^t + \nu_i^t  h_i^t\right) - c_i^{t(i)}(u_i^{t(i)}) \right)  \nonumber \\
&& - \sum_{t \in T} \sum_{i \in N} c_o  \left(\alpha_i^t   p_i^t + \beta_i^t  q_i^t + \gamma_i^t  r_i^t + \nu_i^t  h_i^t\right) \nonumber \\
&& - \sum_{t \in T} \sum_{i \in E \cup N} \sigma_{i}  \text{ max} \left(0, \mu_i - \kappa^t \right)  z_i^t    - \sum_{i \in E} \sum_{j \in S(i)} \left(\psi/2\right)  s_{ij} \nonumber
\end{eqnarray}
\begin{align}
&\text{s.t. Const.} \quad \eqref{eq:4}-\eqref{eq:4b},
\eqref{eq:5.1}-\eqref{eq:17.1},
\eqref{eq:3.2}-\eqref{eq:newdeviation1}  \nonumber \\
& {\left(z_i^t\right)}^2        \leq     p_i^t  f_i^t &&   i \in {E \cup N}, t \in T   \label{eq:34.1}\\
&{\left(z_i^t\right)}^4        \leq     {\left(f_i^t\right)}^2  q_i^t  z_{i}^t && i \in {E \cup N}, t \in T  \label{eq:35.1}\\
&{\left(f_i^t\right)}^4        \leq     {\left(z_i^t\right)}^2 r_i^t  f_i^t && i \in {E \cup N}, t \in T  \label{eq:36.1}\\
&{\left(f_i^t\right)}^2        \leq     h_i^t  z_i^t  &&  i \in {E \cup N}, t \in T  \label{eq:37.1}\\
&b_{i}^{4} \leq g_{i}^{2} b_{i}  && i \in E \label{eq:conicdelay.1}\\
&d_n \geq \omega_{in}^1  && i \in E_{I}, n \in N_O  \label{MCbas}\\
&\omega_{in}^1 \leq  v_i^{u}  y_{in} && i \in E_{I}, n \in N_O  \label{MC1}\\
&\omega_{in}^1 \geq  v_i^{\ell}   y_{in} && i \in E_{I}, n \in N_O \label{MC1l}\\
&\omega_{in}^1 \leq  v_i - v_i^{\ell}  \left(1 - y_{in} \right) && i \in E_{I}, n \in N_O \label{MC1l.1}\\
&\omega_{in}^1  \geq  v_i - v_i^{u}  \left(1 - y_{in} \right) && i \in E_{I}, n \in N_O  \label{MC4} \\
&d_i \geq \omega_{ni}^2  && i \in E_{I}, n \in N_O  \\
&\omega_{ni}^2 \leq  v_n^{u}  y_{ni} && i \in E_{O}, n \in N_I  \label{MC1}\\
&\omega_{ni}^2 \geq  v_n^{\ell}   y_{ni} && i \in E_{O}, n \in N_I \label{MC2l}\\
&\omega_{ni}^2 \leq  v_n - v_n^{\ell}  \left(1 - y_{ni} \right) && i \in E_{O}, n \in N_I \label{MC2l.1}\\
&\omega_{ni}^2  \geq  v_n - v_n^{u}  \left(1 - y_{ni} \right) && i \in E_{O}, n \in N_I  \label{MC4} \\
& d_j \geq \omega_{ij}^3 &&   (i,j) \in C_E \\
& \omega_{ij}^3 \leq  v_i^{u}  \bigg(1 -  \sum_{k \in S(j)} s_{jk}  \bigg) && (i,j) \in C_E  \label{MC1}\\
&\omega_{ij}^3 \geq  v_i^{\ell}   \bigg (1 - \sum_{k \in S(j)} s_{jk}  \bigg)  && (i,j) \in C_E\label{MC3l}\\
&\omega_{ij}^3 \leq  v_i - v_i^{\ell}  \sum_{k \in S(j)} s_{jk}  && (i,j) \in C_E \label{MC3l.1}\\
&\omega_{ij}^3 \geq  v_i - v_i^{u} \sum_{k \in S(j)} s_{jk} && (i,j) \in C_E  \label{MC4}\\
&d_k \geq \omega_{ik}^4 && k \in S(j), (i,j) \in C_E \\
&\omega_{ik}^4 \leq  v_i^{u}  s_{jk} &&  k \in S(j), (i,j) \in C_E  \label{MC1}\\
&\omega_{ik}^4 \geq  v_i^{\ell}   s_{jk} && k \in S(j), (i,j) \in C_E \label{MC4l}\\
&\omega_{ik}^4 \leq  v_i - v_i^{\ell}   \left(1 - s_{jk}\right)  && k \in S(j), (i,j) \in C_E \label{MC4l.1}\\
&\omega_{ik}^4 \geq  v_i - v_i^{u}  \left(1 - s_{jk} \right) &&  k \in S(j), (i,j) \in C_E  \label{MCson}
\end{align}

\section{Computational Study}
\label{Computation}

In this section, we first test and compare the performance of two approaches, CTC and CTC-AS, proposed in the paper in terms of their effectiveness to improve the airline's profitability computationally through a full-factor experimental design. Then we test and compare the effectiveness of the stronger reformulations described in Section~\ref{sec:reformulation} in solving the computationally intensive approach CTC-AS with aircraft swapping.

In the experimental study, we test performance of MICQ  reformulations with Big-M constraints and McCormick inequalities, respectively. All experiments are performed on a workstation with a 3.60 GHz Intel R Xeon R CPU E5-1650 and 32 GB main memory. The mixed-integer conic quadratic reformulations are implemented using JAVA programming language with a connection to IBM ILOG CPLEX Optimization Studio 12.7.1.

We use a sample schedule extracted from the database ``Airline On-Time
Performance Data," provided by the Bureau of Transportation Statistics of
the US Department of Transportation, BTS \cite{BTSperformance}, and query the planned
departure and arrival times of all United Airlines (UA) domestic flights for the date
of 03/01/2018 from the database. In our computations, departure time intervals ([$d^{\ell}, d^{u}$]) are determined by adding/subtracting ninety minutes to planned departure times of the sample schedule. The flight block times in the sample schedule are calculated by taking the difference between the scheduled arrival and departure times. Then, we assume that 30 minutes of a flight block time is non-cruise time ($\nu$) and remaining is cruise time ($u$). In experiments, we compress the cruise times at most 15\%, hence $\ell = 0.85 u$. Moreover, in the proposed models, we keep each aircraft route, i.e., sequence of flights as provided in the sample schedule.

In this study, new flights are connected to the existing schedule at hub airport ORD. Therefore, in the sample schedule, we remove the route  of aircraft that does not visit ORD airport for this particular day. This is reasonable, since the flights of aircraft which does not visit ORD will not be affected by introducing new flights. The resulting sample schedule includes 300 flights operated by 81 aircraft.

\subsection{Description of the data for the experimental study}

In order to analyze the effects of problem
parameters on the airline's profit, we conduct a $2^k$ full-factorial
experimental design. The experimental factors and their
levels are given in Table \ref{factor}.

\setcounter{table}{2}
\begin{table}[ht]\footnotesize
	\centering
	\caption{Factor values. }
	\label{factor}%
	\begin{tabular}{ r| c| c }
		\hline \hline 
		\multicolumn{1}{c}{ }&\multicolumn{2}{|c}{Levels}\\
		\hline
		\multicolumn{1}{c|}{Factor Description} & \multicolumn{1}{c|}{Low} & \multicolumn{1}{c}{High} \\
		\hline
		$c_{fuel}$ (\$/kg) &  0.6 &  1.2 \\
		$\sigma_b$ (\$/passenger) & 60 &  200 \\
		$\psi$ (\$/swap)  & 500 & 1000 \\
		\hline \hline
	\end{tabular}
	
\end{table}

The fuel prices for lower and higher settings, respectively, are estimates based on the history of fuel prices obtained from IATA fuel price monitor \cite{fuelprice}, which shows a fluctuation between \$0.6/kg and \$1.2/kg during years 2008 -- 2018.
In the table $\sigma_b$ is a base value for the opportunity cost for each of the spilled passengers due to the insufficient seat capacity of the
aircraft. \c{S}afak et al. \cite{Safak} express the cost of spilled passenger for each flight using airport congestion coefficients, e.g., favoring the populated markets as follows:
\begin{equation}
\sigma_i = \sigma_b {\left(e_{O_i}\right)} {\left(e_{D_i}\right)},\quad i\in E \cup N,
\label{eq:spilledcost}
\end{equation}
\noindent where $e_{O_i}$ and $e_{D_i}$ represent the congestion coefficients for the origin and destination airports of flight $i \in E \cup N$. These coefficient values are provided in \c{S}afak et al. \cite{Safak}.
$\psi$ is the cost of changes caused by swapping the aircraft of the flights. For low and high values of swap cost, we have used \$500 (proposed by Marla et al. \cite{Marla}) and \$1000, respectively.

We consider six aircraft types and list the fuel burn related parameters, the corresponding maximum range cruise (MRC) speed, and the seat capacity  in Table \ref{aircraftParam}.
The coefficients of the fuel burn function \eqref{eq:cost}, $\alpha_i^t, \beta_i^t, \gamma_i^t, \nu_i^t$, are calculated as specified in \c{S}afak et al. \cite{Safak} using the corresponding values of fuel burn related parameters in Table \ref{aircraftParam}.

\begin{table}[ht]\scriptsize
	\centering
	\caption{Aircraft parameters. }
	\label{aircraftParam}%
	\begin{tabular}{r| c c c c c c}
		\hline \hline
		Aircraft type       & B727 228      & B737 500  &MD 83 & A320 111   & A320 212      & B767 300\\
		\hline
		Capacity            &134               & 122         &148            & 172       & 180           & 218   \\
		Mass (kgs)          & 74000        & 50000          &61200          & 62000     & 64000         & 135000\\
		Surface($m^2$)        & 157.9            & 105.4      &118             & 122.4     & 122.6         & 283.3\\
		$C_{D0,CR}$           & 0.018          & 0.018        &0.0211            & 0.024     & 0.024         & 0.021\\
		$C_{D2,CR}$           & 0.06         & 0.055          &0.0468          & 0.0375    & 0.0375        & 0.049\\
		$Cf_1$                &0.53178        & 0.46         &0.7462            & 0.94      & 0.94          & 0.763\\
		$Cf_2$                &276.72          & 300           &638.59        & 50000     & 100000        & 1430\\
		$cf_{CR}$            &0.954                & 1.079    &0.9505         & 1.095     & 1.06          & 1.0347\\
		MRC speed (km/h)           & 867.6       & 859.2            &867.6         & 855.15    & 868.79        & 876.70 \\
		$\tau_{b}^t$ (min)           & 32         &36              & 26             & 28        & 30            & 40   \\
		\hline \hline
	\end{tabular}
	
\end{table}

For a flight $i$,
the aircraft turnaround time ($\tau_{i}^{t}$) needed to prepare the aircraft for the next flight is estimated using the expression
\begin{equation}
\tau_{i}^t = \tau_{b}^t \cdot{e_{D_i}}\quad t \in T,
\label{tab:turntime}
\end{equation}
where $\tau_{b}^t$ is a base value for aircraft turnaround time.  Therefore, turnaround time of an aircraft visiting a congested airport will take longer. The calculated aircraft turnaround times match with the aircraft turnaround times given in Ar{\i}kan et al. \cite{arikanDes}.

The passenger demand for existing flights are generated uniformly between 110 and
134, 110 and 122, 110 and 148, 150 and 172, 160 and 180, 160 and
218, for aircraft types B727 228, B737 500, MD 83, A320
111, A320 212 and B767 300, respectively. Without loss of generality, we assume that the original
aircraft assignments meet all passenger demand. Under this experimental setting, we can analyze the performance of aircraft
swapping while trading-off between the cost of fuel burn and cost of spilled passengers.

\subsection{Performance analysis of CTC and CTC-AS} While aircraft swapping in addition to re-timing departures and cruise speed control provides a greater flexibility to accommodate the new flights, it is of interest to study the incremental increase in the airlines profit due to the heavy computational burden of solving CTC-AS.

In this experimental study, we use  300 flights operated by 81 aircraft of the sample schedule. We consider adding round-way trips \{ORD-IAH\}, \{ORD-BOS\} and \{ORD-MSP\} so that there are six new flights to be added into the new schedule. The demand for the new flights are generated uniformly between 120 and 200.  The estimated demand ranges are defined based on the seat capacity of the aircraft types commonly used to operate these trips. The fares for the new flights are generated uniformly between USD 120 and USD 350, based on an analysis of the ticket prices of the flights in these trips for United Airlines. According to Eurocontrol's report \cite{eurocrew} on dynamic cost indexing, total crew cost per block hour varies between \$280 and \$800. In this study, we let the crew cost for each new flight as \$400/hr. Using an expected block time for each new flight,  the total crew cost for six new flights is calculated as \$6500. For each new flight, we also consider \$1500 service cost including landing fees, ground handling fees, insurance fees, onboard services costs, etc.

We design a $2^3$ experimental study with two levels for each experimental
factor. For each combination of the factor levels, we solve 10 randomly generated instances with
the approaches CTC and CTC-AS, respectively. In both formulations, we replace logical constraints with McCormick inequalities and handle the nonlinear fuel and emission costs using mixed integer conic quadratic programming described in Proposition \ref{conicfuel}. Then, each instance is approximately solved less than \REV{\textbf{sixty seconds}} by the CTC approach. On the other hand, with formulation CTC-AS, the average CPU time is 6,100 seconds due to increased number of conic constraints with binary assignment decisions
and binary swapping decisions. Despite the additional computational complexity,
the CTC-AS approach provides substantial profit improvement over
the simpler cruise time controllability approach CTC, calculated as

\[
\text{Profit improvement ($\%$)} = 100 \times \frac{\text{Optval (CTC-AS) -- Optval (CTC)}} {\text{Optval (CTC)}} \cdot
\]

Table  \ref{comparison} summarizes the results for 80 instances. Observe that the profit improvement significantly increases as the fuel price increases. As the fuel expenses are a major cost component of airlines, this cost component becomes more important with higher fuel price. In order to reduce fuel burn, CTC-AS has an advantage of reassigning flights to more fuel efficient aircraft.   For high fuel price, the profit improvement can reach to 131\% over CTC. On the other hand, if the spill cost is high, then the profit improvement decreases. Similarly,  the profit improvement decreases as the swapping cost parameter increases. Because, swapping the aircraft of a flight with a smaller aircraft may spill some of the passengers, and such swaps incur additional spilled passenger cost for CTC-AS approach. CTC-AS approximately yields a 53\% improvement in airline's profit compared to CTC for the factor values analyzed in this study.


\begin{table}[ht]\footnotesize
	\centering
	\caption{Profit improvement of CTC-AS over CTC. }
	\label{comparison}
	\begin{tabular}{ l l  r  r  r}
		\hline
		\hline
		\multicolumn{2}{c}{}& \multicolumn{3}{c}{Profit Impr. (\%) }  \\
		\multicolumn{2}{c}{}&\multicolumn{1}{c}{min}&\multicolumn{1}{c}{avg}&\multicolumn{1}{c}{max} \\ \hline
		$c_{fuel}$& Low & 8		&25		&44 \\
		&High	&42		&81		&131 \\
		$\sigma_b$& Low	&25		&66		&131 \\
		&High&8		&39		&107 \\
		$\psi$	&Low 	&10		&55		&131\\
		&High	&8		&51		&125\\
		\hline
		& avg 	&	&53	& \\
		\hline
		\hline
	\end{tabular}
	
\end{table}

\subsubsection{What-if analysis on the number of aircraft swaps} CTC-AS approach utilizes the aircraft swapping mechanism to reduce the fuel burn. On the other hand, if the aircraft of a flight is swapped with a smaller aircraft, then some of the passengers of the subsequent flights might be spilled. Therefore, CTC-AS approach trades-off the cost of fuel burn with the cost of spilled passengers. To see the effect of the number of swaps, we restrict the maximum number of swaps with the following constraint:
\begin{equation}
\sum_{i \in J} \sum_{j \in AS(i)} x_{ij} \leq max\_swap
\end{equation}

\noindent 
A schedule planner can specify and modify the maximum number of swaps and analyze the influence of the number of swaps on the airline's profit. In Figure \ref{fig:whatif1}, we provide the efficient frontier for a problem instance with 300 flights and 81 aircraft solved with different levels of factors in Table \ref{level}. If the fuel price is high, then the airline's profit significantly increases as the number of swaps increases. Since the fuel cost is the major cost component of airlines, fuel burn
has a greater influence on airline's profit in this case. The total fuel burn can be reduced by reassigning the fuel efficient aircraft to longer trips and using the less efficient aircraft for shorter trips. \ignore{This reassignment can be achieved by swapping the aircraft among flights. } On the other hand, if the fuel price is low and the spill parameter is high, the profit is slightly improved as the number of swaps increases and the profit remains constant after seven swaps. In this case, it is not preferred to spill passengers due to the high spill cost. As shown in Figure \ref{fig:whatif2}, the percentage of spilled passengers is the lowest for factor combinations 3 and 7 as expected. The diminishing rate of return in the profit increase allows the airline to limit the number of swaps to a few to gain a large benefit with a small number of swaps.

\begin{table}[ht]\footnotesize
	\centering
	\caption{Combination of factor levels.}
	\label{level}
	\begin{tabular}{ c r r r}
		\hline
		\hline
		\multicolumn{1}{c}{ } & \multicolumn{3}{c}{Levels of}\\
		Factor combination	& \multicolumn{1}{c}{$\psi$ }& \multicolumn{1}{c}{$\sigma_b$}& \multicolumn{1}{c}{$c_{fuel}$}\\
		\hline
		1& Low & Low & Low \\
		2 & Low & Low & High \\
		3& Low & High & Low \\
		4 & Low & High & High \\
		5& High & Low & Low \\
		6 & High & Low & High \\
		7& High & High & Low \\
		8 & High & High & High \\
		\hline
		\hline
	\end{tabular}
	
\end{table}

\begin{figure}[htbp]
	\centering
	\includegraphics[width=0.75\textwidth]{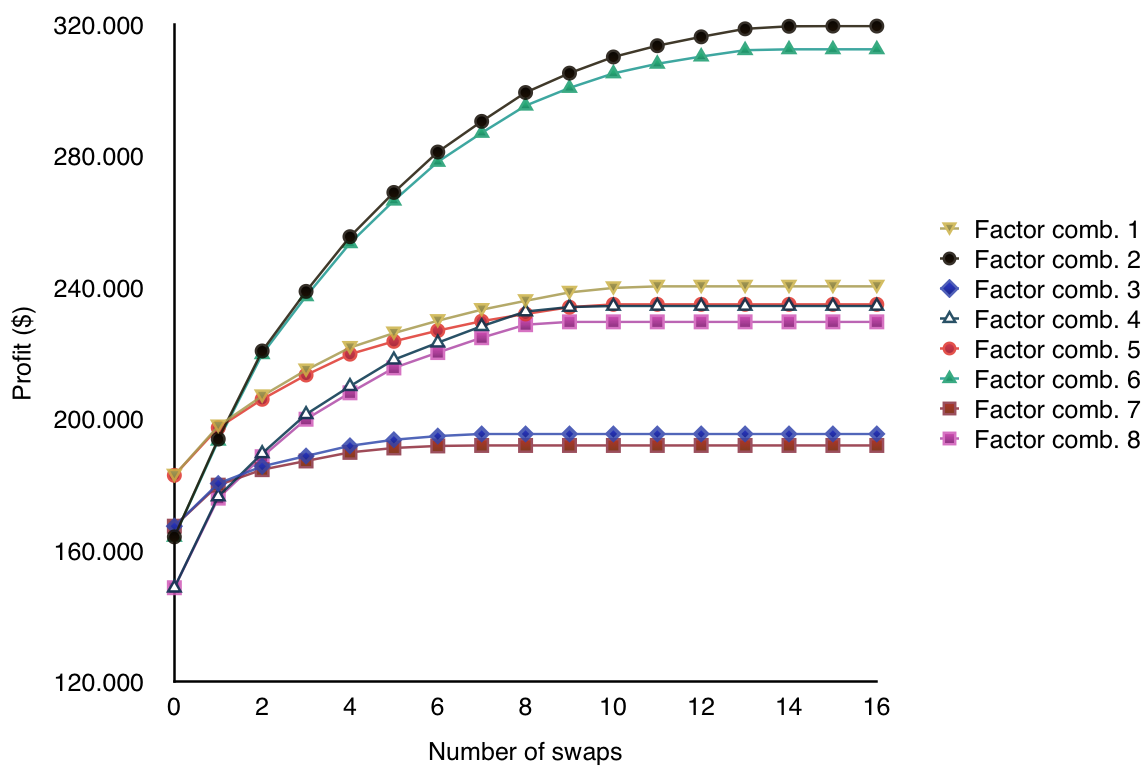}
	\caption{Efficient frontier of aircraft swaps.} \label{fig:whatif1}
\end{figure}

\begin{figure}[htbp]
	\centering
	\includegraphics[width=0.75\textwidth]{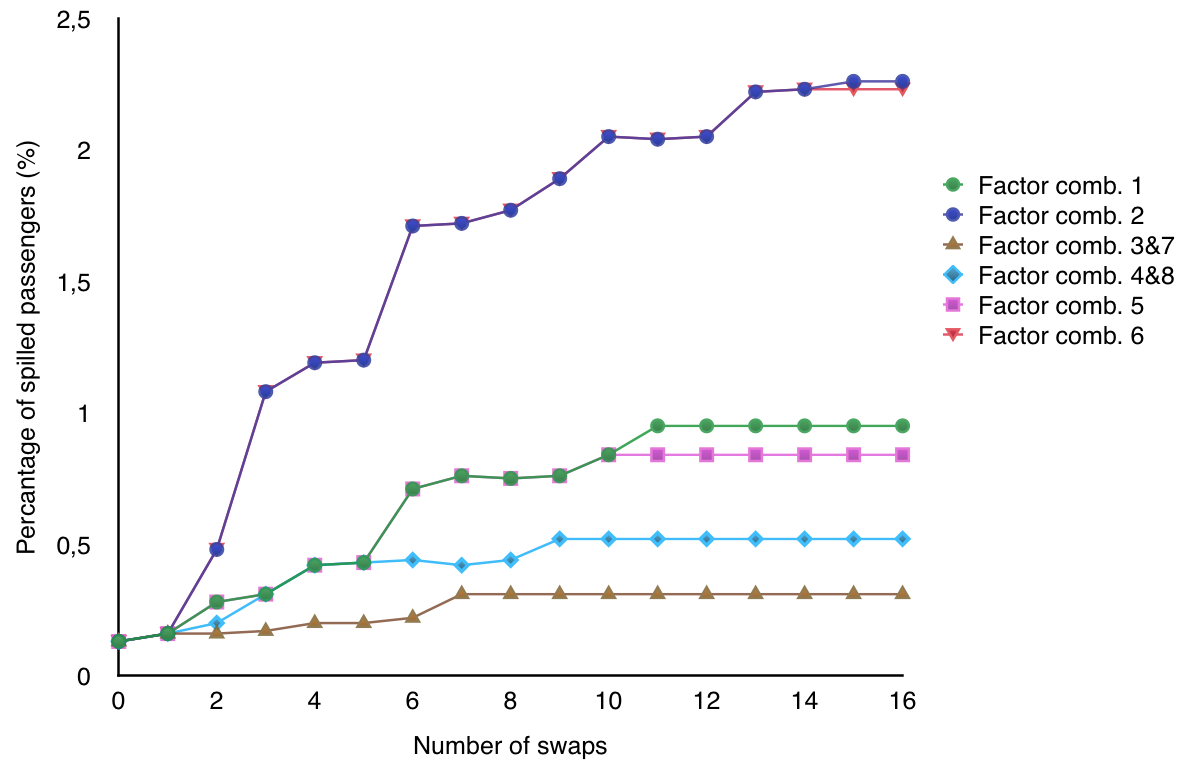}
	\caption{The effect of swap decisions on spilled passengers.} \label{fig:whatif2}
\end{figure}

The computational results clearly demonstrate that the benefit of CTC-AS over CTC and highlights the need to address the computational difficulty for solving the CTC-AS  model. In the next section, we will test the performance of reformulations of CTC-AS.

\subsubsection{What-if analysis for aircraft dependent swap costs.} An airline may consider its passenger's satisfaction when the aircraft of a passenger's flight is swapped. While passengers would be satisfied when the aircraft of their flights is swapped with a larger one, in other case, passengers would not feel comfortable with smaller aircraft. Therefore, in this section, we consider the aircraft dependent swapping cost parameters provided in Table \ref{swapDependent}. If aircraft type ($t$) in the first row is swapped with another aircraft type ($t^{'}$) in one of the columns, we incur an additional penalty cost $\Phi^{ti t^{'}}$ to original swap cost $\psi$ for this swap.

\begin{table}[ht]\scriptsize
	\centering
	\caption{Aircraft dependent swap cost. }
	\label{swapDependent}%
	\begin{tabular}{r| c c c c c c}
		\hline \hline
		Aircraft type       & B727 228      & B737 500  &MD 83 & A320 111   & A320 212      & B767 300\\
		\hline
		B727 228		& 0			&	0		&0	&0		&0		&0\\
		B737 500		&0			& 0			&0		&0		&0		&0\\
		MD 83		&100			&100			&0		&0		&0		&0\\
		A320 111		&150			&150			&100		&0		&0		&0\\
		A320 212		&150			&150			&100		&0		&0		&0\\
		B767 300		&200			&200			&150		&100		&100		&0\\
		\hline \hline
	\end{tabular}
	
\end{table}

We redefine the cost of swap ($\phi_{ij}$) between the aircraft of flights $i$ and $j$ and calculate as:

\begin{equation}
\phi_{ij}  = \psi + \Phi^{t(i), t(j)}. \nonumber
\end{equation}

\noindent $\psi$ is still considered as \$500 and \$1000 for low and high values, respectively. For each combination of fuel price, base spill cost and swap cost levels, we solve ten instances. Then, we report the minimum, average and maximum profit improvements of CTC-AS approach over the CTC approach in Table \ref{profitSwap}.

\begin{table}[ht]\footnotesize
	\centering
	\caption{Profit improvement of CTC-AS over CTC. }
	\label{profitSwap}
	\begin{tabular}{ l l  r  r  r}
		\hline
		\hline
		\multicolumn{2}{c}{}& \multicolumn{3}{c}{Profit Impr. (\%) }  \\
		\multicolumn{2}{c}{}&\multicolumn{1}{c}{min}&\multicolumn{1}{c}{avg}&\multicolumn{1}{c}{max} \\ \hline
		$c_{fuel}$& Low & 8		&24		&43 \\
		&High	&40		&78		&127 \\
		$\sigma_b$& Low	&24		&64		&127 \\
		&High&8		&38		&103 \\
		$\psi$	&Low 	&10		&53		&127\\
		&High	&8		&49		&121\\
		\hline
		& avg 	&	&51	& \\
		\hline
		\hline
	\end{tabular}
	
\end{table}

As expected, if there is an additional aircraft dependent swap cost, then the profit improvement slightly decreases compared to the single swap cost case provided in Table 5. On the other hand, CTC-AS approach with aircraft type dependent swap cost still significantly increases the profit  compared to the CTC approach, if the fuel price increases. Because, CTC-AS approach has an advantage of reducing the fuel burn by reassigning the fuel efficient aircraft among subset of flights. However, Table \ref{percentageSpill} shows that these reassignments spill more passengers as the fuel price increases. If the spill cost parameter increases, then the number of swaps decreases so that the number of spilled passengers decreases. The results indicate that an average of 0.9\% of passengers are spilled while maximizing the airline profit to capture the additional demand of new flights.

\begin{table}[ht]\footnotesize
	\centering
	\caption{Percentage of the spilled passengers}
	\label{percentageSpill}
	\begin{tabular}{ l l  r  r  r}
		\hline
		\hline
		\multicolumn{2}{c}{}& \multicolumn{3}{c}{Percentage of }  \\
		\multicolumn{2}{c}{}& \multicolumn{3}{c}{spilled pax. (\%) }  \\
		\multicolumn{2}{c}{}&\multicolumn{1}{c}{min}&\multicolumn{1}{c}{avg}&\multicolumn{1}{c}{max} \\ \hline
		$c_{fuel}$& Low & 0.2		&0.6		&1.3 \\
		&High	&0.3		&1.2		&2.3 \\
		$\sigma_b$& Low	&0.7		&1.4		&2.3 \\
		&High&0.2		&0.4		&0.7 \\
		$\psi$	&Low 	&0.2		&0.9		&2.3\\
		&High	&0.2		&0.9		&2.3\\
		\hline
		& avg 	&	&0.9	& \\
		\hline
		\hline
	\end{tabular}
	
\end{table}

\subsection{Analysis of the reformulations}
The original formulation in Section \ref{SecondFormulation} has nonlinear fuel burn and carbon emission functions in the objective. To efficiently handle them, we propose a mixed-integer conic quadratic reformulation. Here, we compare two alternative mixed-integer conic quadratic reformulations referred to as MICQ1 and MICQ2. The second one MICQ2 is formulated using the strengthened inequalities as shown in Table \ref{alternative}. It is clear that inequalities of MICQ2 are valid for MICQ1 since for $z \in \{0,1\}$ they reduce to inequalities of MICQ1. Recall that the additional constraints $\ell z \leq f \leq u z$, which force $f$ to zero whenever $z$ is zero.

With each of these formulations we compare two different ways of forcing the logical constraints. First, ``MICQ1/2+BigM", corresponds to the mixed integer conic quadratic reformulation, where the logical constraints are replaced with the Big-M constraints \eqref{eq:500}-\eqref{eq:503}. Second, ``MICQ1/2+MC", replaces the logical constraints with the McCormick inequalities \eqref{MCbas}--\eqref{MCson} described in Section \ref{McCormick}. We do not need to add inequalities \eqref{MC1l.1}, \eqref{MC2l.1}, \eqref{MC3l.1} and \eqref{MC4l.1} as discussed in Section \ref{McCormick}, therefore we do not include them in the experiments.

We perform a $2^3$ experimental design with two levels of experimental factors in Table \ref{factor}. For each of these eight factor combinations, we generate 10 instances, resulting in a total of 80 instances.  A time limit of 18000 seconds is used for each runtime of each instance.

\begin{table}[ht]\footnotesize
	\centering
	\caption{Alternative conic formulations.}
	\label{alternative}
	\begin{tabular}{ c l  l  }
		\hline
		\hline
		\multicolumn{1}{c}{}& \multicolumn{1}{l}{MICQ1} & \multicolumn{1}{l}{MICQ2} \\
		\hline
		& \(\displaystyle z^2 \leq p \cdot f	\)	&\(\displaystyle z^2 \leq p \cdot f	\) \\
		Hyperbolic 	& \(\displaystyle z^4 \leq f^2 \cdot  q \cdot 1  \)	& \(\displaystyle z^4 \leq f^2 \cdot  q \cdot z  \) \\
		inequalities 	& \(\displaystyle {f}^4        \leq     {1}^2 \cdot r \cdot f \)	& \(\displaystyle {f}^4        \leq     {z}^2 \cdot r \cdot f \) \\
		&\(\displaystyle {f}^2        \leq     h \cdot 1  \)	& \(\displaystyle {f}^2        \leq    h \cdot z  \) \\
		\hline
		\hline
	\end{tabular}
\end{table}

The computational performance results with the alternative formulations are summarized in Tables~\ref{comparison2}--\ref{comparison3}. Table \ref{comparison2} displays the results with the conic formulation MICQ1, whereas Table \ref{comparison3} presents the improved results with the strengthened conic formulation MICQ2. For each fuel price, base spill cost, and swap cost parameters, the first column ``\# nodes" reports the average number of nodes explored in the branch-and-bound algorithm. The second column ``time" reports the average CPU time (in seconds) for the instances that could be solved  to optimality within the time limit with the number of such instances in the parenthesis. The symbol (-) indicates that none of the instances could be solved to optimality within the time limit. The third column ``gap" reports the average percentage optimality gap between the best bound at termination and the  integer objective  with the number of such instances that could not be solved to optimality in parenthesis.

Table \ref{comparison2} shows that none of the instances could be solved to optimality
using MICQ1 with either Big-M or McCormick inequalities within the time
limit. On the other hand, the computational performances of
strong conic formulations MICQ2+BigM/MC are significantly better. The formulation MICQ2+BigM solves the most of instances to optimality within the time limit as indicated in Table \ref{comparison3}. Even, for each instance that could not be solved by formulation MICQ1+BigM  to optimality, the stronger formulation MICQ2+BigM achieves a lower optimality gap. When the McCormick valid inequalities are added to the strong conic formulation MICQ2, the computational performance further improves. The McCormick inequalities help to solve all instances
faster within the time limit. For some instances, the strong conic programming
formulation MICQ2 with McCormick estimators is solved more than
twice as fast than the best alternative.

\begin{table}[ht]\footnotesize
	\setlength{\tabcolsep}{1pt}
	\centering
	\caption{Comparison of conic formulations.}
	\label{comparison2}
		\scalebox{0.9}{
	\begin{tabular}{ r r r | r c  r| r c r }
		\hline
		\hline
		\multicolumn{3}{c}{} &\multicolumn{3}{c}{MICQ1+Big-M} &\multicolumn{3}{c}{MICQ1+MC}  \\
		\multicolumn{1}{c}{$\psi$}& \multicolumn{1}{c}{$\sigma_b$}&  \multicolumn{1}{c}{$c_{fuel}$ }&\multicolumn{1}{c}{\# nodes}&  \multicolumn{1}{c}{cpu (sec)}& \multicolumn{1}{c}{gap (\%)}& \multicolumn{1}{c}{\# nodes}&  \multicolumn{1}{c}{cpu (sec)} & \multicolumn{1}{c}{gap (\%)} \\
		\hline
		500	&60		&0.6		&55726	&-		&11.69(10)	&35416	&-		&7.78(10)		\\
		&		&1.2		&51729	&-		&13.71(10)	&33419	&-		&9.38(10)		\\
		&200		&0.6		&101801	&-		&7.09(10)		&62260	&-		&4.83(10)			\\
		&		&1.2		&70944	&-		&8.55(10)		&39552	&-		&4.74(10)			\\ \hline
		1000		&60		&0.6		&56412	&-		&11.99(10)		&36445	&-		&7.68(10)	\\
		&		&1.2		&56506	&-		&13.45(10)	&35584	&-		&9.01(10)			\\
		&200		&0.6		&102414	&-		&7.00(10)		&55232	&-		&4.32(10)			\\
		&		&1.2		&71287	&-		&7.74(10)		&37564	&-		&5.25(10)		\\
		\hline
		\hline
	\end{tabular}
}
\end{table}

\begin{table}[ht]\footnotesize
	\setlength{\tabcolsep}{1pt}
	\centering
	\caption{Comparison of the strengthened conic formulations.}
	\label{comparison3}
		\scalebox{0.9}{
	\begin{tabular}{ r r r |  r r  r| r r r }
		\hline
		\hline
		\multicolumn{3}{c}{} &\multicolumn{3}{c}{MICQ2+Big-M} &\multicolumn{3}{c}{MICQ2+MC}  \\
		\multicolumn{1}{c}{$\psi$}& \multicolumn{1}{c}{$\sigma_b$}&  \multicolumn{1}{c}{$c_{fuel}$ }&\multicolumn{1}{c}{\# nodes}&  \multicolumn{1}{c}{cpu (sec)}& \multicolumn{1}{c}{gap (\%)}& \multicolumn{1}{c}{\# nodes}&  \multicolumn{1}{c}{cpu (sec)} & \multicolumn{1}{c}{gap (\%)} \\
		\hline
		500	&60		&0.6		&13511	&12603(9)		&0.07(1)	&3538	&8206(10)		&0(0)		\\
		&		&1.2		&25209	&12988(6)		&1.46(4)	&6677	&8688(10)		&0(0)		\\
		&200		&0.6		&19908	&5798(9)		&3.48(1)	&7640	&3436(10)		&0(0)			\\
		&		&1.2		&12981	&7304(9)		&2.35(1)	&3783	&3033(10)		&0(0)		\\ \hline
		1000		&60		&0.6		&9900	&10601(9)		&1.00(1)	&3641	&8074(10)		&0(0)	\\
		&		&1.2		&17262	&12930(8)		&0.47(2)	&6161	&8803(10)		&0(0)			\\
		&200		&0.6		&13969	&5085(9)		&3.06(1)	&6560	&3380(10)		&0(0)			\\
		&		&1.2		&10958	&7043(9)		&0.08(1)	&8935	&5608(10)		&0(0)		\\
		\hline
		\hline
	\end{tabular}
}

\end{table}

In Table \ref{avgPerforms}, we summarize the results for 80 instances. The conic formulation MICQ1 including BigM inequalities could not solve any instances to optimality within the time limit. If we replace BigM inequalities with McCormick inequalities in formulation MICQ1, then the average optimality gap is decreased from 10.15\% to 6.61\%. If we use strong conic formulation MICQ2 with BigM inequalities, we can further decrease the average optimality gap to 1.40\% over 12 instances that are not solved to optimality. When we reformulate the logical constraints with McCormick inequalities in strong conic formulation MICQ2, the computational performance is the best. All instances are solved to optimality with a dramatic reduction in number
of nodes explored.

\begin{table}[ht]\footnotesize
	\centering
	\caption{Average performances of conic formulations.}
	\label{avgPerforms}
	\begin{tabular}{ r| r c r}
		\hline
		\hline
		\multicolumn{1}{c}{ } & \multicolumn{1}{c}{\# nodes} & \multicolumn{1}{c}{\# CPU} & \multicolumn{1}{c}{\# Gap}\\
		\hline
		MICQ2+MC& 5867 & 6153(80) & 0(0) \\
		MICQ2+Big-M & 15462 & 9294(68) & 1.40(12) \\
		MICQ1+MC& 41934 & - & 6.61(80) \\
		MICQ1+Big-M & 70852 & - & 10.15(80) \\
		\hline
		\hline
	\end{tabular}
	
\end{table}

Figure \ref{fig:nodes} analyzes the average number of nodes explored in the branch-and-bound algorithm for each reformulation. For each factor combination, conic formulation MICQ1 with Big-M constraints explores a large number of nodes within the time limit.  By adding McCormick inequalities to MICQ1, the average number of nodes is reduced by half, but it is still more than three times required by the conic formulations MICQ2. With strengthened inequalities of conic formulation MICQ2+Big-M, a large number of the nodes are fathomed and the optimal solutions are found for many instances within the time limit. Further, conic formulation MICQ2 can be improved and strengthened by adding McCormick inequalities, which leads to further significant reduction in the number of nodes.

\begin{figure}[htbp]
\centering
\includegraphics[width=0.7\textwidth]{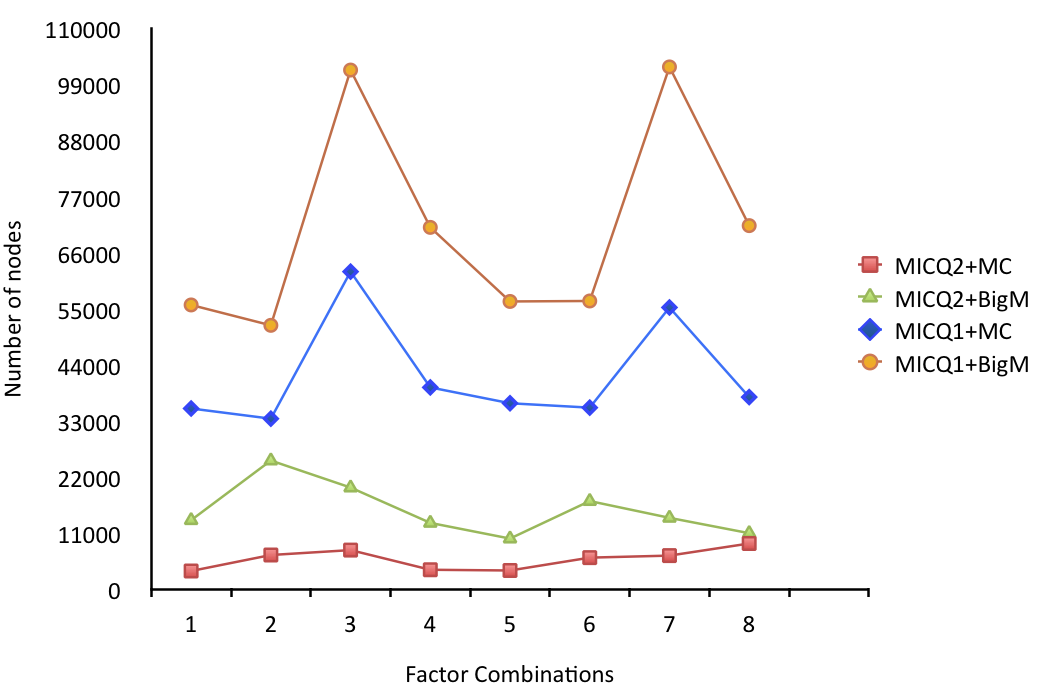}
\caption{Analysis on number of nodes.} \label{fig:nodes}
\end{figure}

\section{Conclusion}
\label{Sec:6}
We propose two approaches to accommodate new flights into an existing flight schedule of a particular day. Both of the approaches make use of re-timing of the flight departure times and cruise time controllability to reduce the block times of the existing flights, thereby making time to operate the new flights in the schedule. The second approach additionally takes the advantage of flexibility offered by aircraft swapping among flights.  The second approach provides substantial cost savings in fuel burn by reassigning flights to fuel-efficient aircraft. However, the nonlinear fuel and emission costs together with additional binary swapping and assignment decisions significantly complicate the problem. To overcome the computational difficulty, we present strong conic quadratic reformulations. The experiments show the superiority of strong conic formulations over an alternative conic formulation. The alternative conic formulation times out for all test instances. On the other hand, the Big-M reformulation of logical constraints with strengthened conic inequalities can solve most of test instances to optimality. For only 12 instances over 80, Big-M reformulation provides an average of 1.40\% optimality gap at termination. As an alternative to Big-M method, when we add McCormick inequalities to the strong conic formulations, all test instances can be solved to optimality with a dramatic reduction in the number of nodes.

In conclusion, we provide two alternative approaches as CTC and CTC-AS with different quality of results and computational difficulties  to an airline. While CTC-AS provides an average of 53\% profit improvement over CTC, the average CPU time to solve the CTC-AS to optimality by a strong conic reformulation together with McCormick inequalities is 6,100 seconds. On the other hand, CTC approach can be easily solved within \REV{\textbf{sixty seconds}}.

This study may lead to several potential research directions.
The computational advantages of the strong conic quadratic models may pave the way for researchers to integrate the consideration of crew itineraries while determining the flight departures. Another extension of this study would
be addressing a strategic planning problem that aims at introducing new flights to new demand points for the next season. Many potential demand scenarios considering the competitor's flights could be analyzed. Moreover, leasing an aircraft to hedge for the demand uncertainties may be an additional
mechanism to introduce new flights.

\section{Acknowledgement}

The authors thank the editor and three anonymous referees for their constructive comments and suggestions that significantly improved this paper. This research was conducted while \"{O}zge \c{S}afak was a visiting scholar at the University of California-Berkeley with support from the Scientific and Technological Research Council of Turkey under Grant 2214. In addition, this research was partially supported by T{\"U}BITAK [Grant 116M542]. Alper Atamt\"urk was supported, in part, by grant FA9550-10-1-0168 from the Office of the Assistant Secretary of Defense for Research and Engineering.

\bibliography{literature}
\bibliographystyle{ieeetr}

\end{document}